\newtheorem{convention}{Convention}
\theoremstyle{plain}
\newtheorem{thm}{Theorem}
  \theoremstyle{definition}
  \newtheorem{defn}[thm]{Definition}
  \theoremstyle{remark}
  \newtheorem{claim}[thm]{Claim}
\newenvironment{lyxlist}[1]
{\begin{list}{}
{\settowidth{\labelwidth}{#1}
 \setlength{\leftmargin}{\labelwidth}
 \addtolength{\leftmargin}{\labelsep}
 }}
{\end{list}}
  \theoremstyle{plain}
  \newtheorem{lem}[thm]{Lemma}
  \theoremstyle{plain}
  \newtheorem{cor}[thm]{Corollary}
  \theoremstyle{plain}
  \newtheorem{fact}[thm]{Fact}
\begin{document}

\author[M. Cohen]{Moran Cohen} \address{ Institute of Mathematics, The Hebrew University of Jerusalem, Jerusalem 91904, Israel. } \email{moranski@math.huji.ac.il}
\thanks{The authors thank Assaf Hasson for his constructive remarks}
\author[S. Shelah]{Saharon Shelah } \address{ Institute of Mathematics, The Hebrew University of Jerusalem,
 Jerusalem 91904, Israel and Department of Mathematics, Rutgers University, New Brunswick, NJ 08854, USA } \email{shelah@math.huji.ac.il} \thanks{This Research was funded partly by the ISF. Paper E65}

\title{Ranks for strongly dependent theories}
\begin{abstract}
There is much more known about the family of superstable theories
when compared to stable theories. This calls for a search of an analogous
{}``super-dependent'' characterization 
in the context of dependent
theories. This problem has been treated in \cite{Sh:783,Sh:863},
where the candidates 
\textquotedbl{}Strongly dependent\textquotedbl{},
\textquotedbl{}Strongly dependent$\,
^{2}$\textquotedbl{} and others
were considered. These families 
generated new families when we considering
intersections with the stable family. 
Here, continuing \cite[\S 2, \S 5E,F,G]{Sh:863},
 we deal with several 
candidates, defined using dividing 
properties and related ranks of
types. Those candidates are 
subfamilies of {}``Strongly dependent''.
fulfilling some promises from \cite{Sh:863}
in particular \cite[1.4(4)]{Sh:863},
we try to make this self contained
within reason
by repeating some things from there.
More specifically we fulfil some promises
from \cite{Sh:863} to
to give more details, in particular:
in \S4 for \cite[1.4(4)]{Sh:863},
in \S2  for \cite[5.47(2)=Ldw5.35(2)]{Sh:863}
and in \S1  for \cite[5.49(2)]{Sh:863}
\end{abstract}
\maketitle
\newpage{}

\section{Strongly dependent theories}

\discussion{The basic property from which this work is derived is strongly dependent$\,^{1}$,
it has been studied extensively in \cite{Sh:863}. For proofs and
more we refer to that article. We quote the necessary minimum in order
to build on that.}
\begin{defn}
\label{def:strong_dep_up}We say that $\kappa^{{\rm ict},1}(T):=\kappa^{{\rm ict}}(T)>\kappa$
if the set \[
\Gamma_{\overline{\varphi}}:=\left\{ \varphi_{i}(\overline{x}_{\eta},\overline{y}_{i}^{j})^{{\bf if}(\eta(i)=j)}:i<\kappa,\; j<\omega,\eta\in\ ^{\kappa}\omega\right\} \]
is consistent with $T$, for some sequence of formulas $\overline{\varphi}=\left\langle \varphi_{i}(\overline{x},\overline{y}_{i}):i<\kappa\right\rangle $.
We will say that $\kappa^{{\rm ict}}(T)=\kappa$ iff $\kappa^{{\rm ict}}(T)>\lambda$
holds for all $\lambda<\kappa$ but $\kappa^{{\rm ict}}(T)>\kappa$
does not.

$T$ is called strongly dependent$\ ^{1}$ if $\kappa^{{\rm ict}}(T)=\aleph_{0}$.
\end{defn}

\discussion{The following properties are used in connecting the new properties
with the original.}
\begin{claim}
\label{cla:cutting_indi_props(2)}~$T$ is not strongly dependent$\,^{1}$
iff there exist sequences \[\overline{\varphi}=\left\langle \varphi_{i}(\overline{x},\overline{y}_{i}):i<\kappa\right\rangle \; \; 
{\rm and} \; \;  \left\langle \overline{a}_{k}^{i}:i<\kappa,k<\omega\right\rangle \]
such that $\lg\overline{y}_{i}=\lg\overline{a}_{k}^{i}$, $\left\langle \overline{a}_{k}^{i}:k<\omega\right\rangle $
an indiscernible sequence over \[\cup\left\{ \overline{a}_{k}^{j}:j\neq i,\; j<\kappa,\; k<\omega\right\} \]
for all $i<\kappa$ it holds that $\left\{ \varphi_{i}(\overline{x},\overline{a}_{0}^{i})\wedge\neg\varphi_{i}(\overline{x},\overline{a}_{1}^{i}):i<\kappa\right\} $
is a type in $\mathfrak{C}$.\end{claim}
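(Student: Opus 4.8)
The plan is to unwind Definition~\ref{def:strong_dep_up} and then treat the two implications separately. Since every theory with infinite models satisfies $\kappa^{{\rm ict}}(T)\geq\aleph_{0}$, saying that $T$ is not strongly dependent$\,^{1}$ is the same as saying $\kappa^{{\rm ict}}(T)>\aleph_{0}$, i.e.\ that $\Gamma_{\overline{\varphi}}$ is consistent with $T$ for some $\overline{\varphi}=\left\langle \varphi_{i}(\overline{x},\overline{y}_{i}):i<\aleph_{0}\right\rangle $; the whole argument goes through with an arbitrary infinite $\kappa$ in place of $\aleph_{0}$, so I will write $\kappa$. Thus it suffices to prove that $\Gamma_{\overline{\varphi}}$ is consistent for some $\overline{\varphi}$ iff there are $\overline{\varphi}$ and an array as in the statement.

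\emph{From consistency of $\Gamma_{\overline{\varphi}}$ to the array.} Realize $\Gamma_{\overline{\varphi}}$, and then, by compactness (every finite fragment of a ``stretched'' copy is, after reindexing the columns, a finite fragment of $\Gamma_{\overline{\varphi}}$), realize the version whose columns are indexed by a linear order $I$ long enough to support extraction of a mutually indiscernible array in the parameters $\kappa,|T|$: parameters $\left\langle \overline{b}_{i}^{t}:i<\kappa,\,t\in I\right\rangle $ and witnesses $\left\langle \overline{c}_{\eta}:\eta\in{}^{\kappa}I\right\rangle $ with $\mathfrak{C}\models\varphi_{i}(\overline{c}_{\eta},\overline{b}_{i}^{t})$ iff $\eta(i)=t$. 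By the standard extraction of mutually indiscernible sequences from a long array there is $\left\langle \overline{a}_{k}^{i}:i<\kappa,\,k<\omega\right\rangle $ with each $\left\langle \overline{a}_{k}^{i}:k<\omega\right\rangle $ indiscernible over $\cup\{\overline{a}_{k}^{j}:j\neq i,\,k<\omega\}$ and such that every finite sub-array realizes the type of a correspondingly ordered finite sub-array of $\left\langle \overline{b}_{i}^{t}\right\rangle $. Hence, for each finite $u\subseteq\kappa$, the tuple $\left\langle \overline{a}_{0}^{i},\overline{a}_{1}^{i}:i\in u\right\rangle $ realizes the type of some $\left\langle \overline{b}_{i}^{s_{i}},\overline{b}_{i}^{t_{i}}:i\in u\right\rangle $ with $s_{i}<t_{i}$; taking any $\eta$ with $\eta(i)=s_{i}$ for $i\in u$, the witness $\overline{c}_{\eta}$ satisfies $\bigwedge_{i\in u}(\varphi_{i}(\overline{x},\overline{b}_{i}^{s_{i}})\wedge\neg\varphi_{i}(\overline{x},\overline{b}_{i}^{t_{i}}))$, so $\bigwedge_{i\in u}(\varphi_{i}(\overline{x},\overline{a}_{0}^{i})\wedge\neg\varphi_{i}(\overline{x},\overline{a}_{1}^{i}))$ is consistent. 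As $u$ is arbitrary, $\{\varphi_{i}(\overline{x},\overline{a}_{0}^{i})\wedge\neg\varphi_{i}(\overline{x},\overline{a}_{1}^{i}):i<\kappa\}$ is a type in $\mathfrak{C}$, while $\lg\overline{y}_{i}=\lg\overline{a}_{k}^{i}$ and the mutual indiscernibility hold by construction; and the $\overline{\varphi}$ is the one we started with.

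\emph{From the array back to $\Gamma$.} Here I would aim at $\Gamma_{\overline{\psi}}$ with $\psi_{i}(\overline{x};\overline{y}^{0},\overline{y}^{1}):=\varphi_{i}(\overline{x},\overline{y}^{0})\wedge\neg\varphi_{i}(\overline{x},\overline{y}^{1})$ and parameters equal to (overlapping) consecutive pairs along the rows. The cheap input is that, each row being indiscernible over the union of the others, a row-wise order-preserving relabelling is realized by an automorphism, so for every choice of $s_{i}<t_{i}$ in $\omega$ — possibly different for different $i$ — the type $\{\varphi_{i}(\overline{x},\overline{a}_{s_{i}}^{i})\wedge\neg\varphi_{i}(\overline{x},\overline{a}_{t_{i}}^{i}):i<\kappa\}$ stays consistent. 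The reading-off is then combinatorial: once one has, after stretching the rows, a witness $\overline{c}_{\eta}$ for each $\eta\in{}^{\kappa}\omega$ whose values $\varphi_{i}(\overline{c}_{\eta},\overline{a}_{k}^{i})$ (with $k$ running along the $i$-th row) switch from ``true'' to ``false'' exactly once, at the place prescribed by $\eta(i)$, the prescribed pair satisfies $\psi_{i}$ and no other pair does — which is $\Gamma_{\overline{\psi}}$, hence $\kappa^{{\rm ict}}(T)>\kappa$.

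The step I expect to be the real obstacle is producing those single-jump (``clean cut'') witnesses. The hypothesis only supplies, per row, one ``true'' immediately followed by one ``false'', and upgrading this to ``many trues then many falses, simultaneously in all rows and with freely prescribed jump positions'' is \emph{not} forced by indiscernibility of a single row (there are one-row counterexamples); it genuinely uses that all $\kappa$ rows carry a \emph{common} cut witness. I would handle it by a nested compactness argument: over the stretched, still mutually indiscernible rows, realize a whole coherent family of cut witnesses — one for each translate of the hypothesis — and then, using mutual indiscernibility once more, extract from this abundance a clean-cut witness for each $\eta$; at this point one may lean on the properties of mutually indiscernible sequences established in \cite{Sh:863} rather than reprove them. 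The remaining verifications — lengths, that $\overline{\psi}$ and the chosen parameters fit the pattern $\Gamma$, and the return to $\kappa=\aleph_{0}$ — are bookkeeping.
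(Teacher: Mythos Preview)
The paper does not prove this claim. The discussion immediately preceding it says explicitly: ``For proofs and more we refer to that article. We quote the necessary minimum in order to build on that.'' Claim~\ref{cla:cutting_indi_props(2)} (and Theorem~\ref{thm:ind_spl_strong_indep_1}) are imported from \cite{Sh:863} as background, so there is no in-paper argument to compare your attempt against.

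On the merits of what you wrote: the forward direction (consistency of $\Gamma_{\overline{\varphi}}$ $\Rightarrow$ mutually indiscernible array) is correct and is the standard Ramsey/Erd\H{o}s--Rado extraction; nothing to add. For the backward direction you correctly isolate the only real issue --- that a single true/false transition per row, even with mutual indiscernibility, does not by itself force the full ict-pattern --- and your proposed fix via $\psi_i(\overline{x};\overline{y}^0,\overline{y}^1)=\varphi_i(\overline{x},\overline{y}^0)\wedge\neg\varphi_i(\overline{x},\overline{y}^1)$ together with stretched rows and compactness is the right shape of argument and is how it is done in \cite{Sh:863}. Your phrase ``nested compactness argument \ldots\ extract from this abundance a clean-cut witness'' is admittedly a placeholder rather than a proof, but since the paper itself defers the entire claim to \cite{Sh:863}, pointing there for that step is exactly in keeping with the paper's own treatment.
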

\begin{thm}
\label{thm:ind_spl_strong_indep_1}For a given (or any) $\alpha\geq\omega$
the following are equivalent\end{thm}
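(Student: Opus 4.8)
The plan is to organise the proof around Claim~\ref{cla:cutting_indi_props(2)}, which already identifies strong dependence$\,^{1}$ with the non-existence of the array configuration described there for $\kappa=\omega$; the remaining work is to fold in the rank-theoretic clauses and to legitimise the phrase ``for a given (or any) $\alpha\geq\omega$''. So the two substantive points to establish are: (i) a \emph{gap} phenomenon -- the rank under consideration, equivalently the supremum of the lengths of the relevant ict-patterns, is either finite or $\infty$, with nothing strictly in between, so that a bound $\leq\alpha$ means exactly the same thing for every infinite $\alpha$; and (ii) that ``rank $\geq\omega$'' is equivalent to $\kappa^{{\rm ict}}(T)>\omega$, i.e.\ to the existence of the Claim~\ref{cla:cutting_indi_props(2)} configuration. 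Granting (i) and (ii), for each fixed $\alpha$ every listed condition collapses, through Claim~\ref{cla:cutting_indi_props(2)}, to ``$T$ is strongly dependent$\,^{1}$''.

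For (ii): from a rank-$\geq\omega$ witness one reads off, by compactness, an ict-pattern of length $\omega$, whence $\kappa^{{\rm ict}}(T)>\omega$; if one wants the $\omega$ rows of the pattern to be mutually indiscernible from the outset, one first extracts them -- legitimate in $\mathfrak C$ provided it is sufficiently saturated (above $\beth_{(2^{|T|+|\alpha|})^{+}}$, say) -- doing this one row at a time so that the witnessing formulas are not changed. The converse is immediate, since an $\omega$-ict-pattern is itself an $\omega$-indexed chain witnessing rank $\geq\omega$.

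For (i): suppose the rank is $\geq\omega$. Extracting indiscernibles as above, one obtains an array $\langle\overline a^i_k:i<\omega,\ k<\omega\rangle$ in which each row $\langle\overline a^i_k:k<\omega\rangle$ is indiscernible over $\bigcup\{\overline a^j_k:j\neq i\}$ and the pattern is still witnessed by the same formulas. Since each row is now an \emph{infinite} indiscernible sequence, compactness elongates all rows simultaneously to length $\alpha$ -- indeed to any length -- preserving both mutual indiscernibility and the formulas; this produces a rank-$\geq\alpha$ witness for every $\alpha$, so the rank is $\infty$. In the other direction, if the rank is $<\omega$ then by Definition~\ref{def:strong_dep_up} $\kappa^{{\rm ict}}(T)>\omega$ fails, and since $\kappa^{{\rm ict}}(T)>n$ holds automatically for finite $n$ (witnessed by equality formulas) this yields $\kappa^{{\rm ict}}(T)=\aleph_0$, i.e.\ $T$ is strongly dependent$\,^{1}$. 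Combining with Claim~\ref{cla:cutting_indi_props(2)}, we conclude, for every $\alpha\geq\omega$: ``rank $\leq\alpha$'' $\Leftrightarrow$ ``rank $<\omega$'' $\Leftrightarrow$ $\kappa^{{\rm ict}}(T)=\aleph_0$ $\Leftrightarrow$ $T$ is strongly dependent$\,^{1}$ $\Leftrightarrow$ no Claim~\ref{cla:cutting_indi_props(2)} configuration exists with $\kappa=\omega$, which is the asserted chain of equivalences.

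The main obstacle, as usual with these patterns, is the extraction of a \emph{mutually} indiscernible array: refining one row must not spoil the homogeneity of the others, nor the formulas witnessing the pattern. I would run it by building an increasing $\omega$-chain of finite approximations -- at stage $n$ applying Erd\H{o}s--Rado to make the first $n$ rows $n$-indiscernible over finite initial segments of the remaining rows and over a fixed enumeration of the parameters occurring in the formulas -- and then passing to the limit by compactness together with a diagonal choice. One must also keep the cardinal arithmetic honest so that, when $\alpha>\omega$, rows of length $\alpha$ can genuinely be extracted and subsequently elongated. The remaining implications are routine uses of compactness and of Definition~\ref{def:strong_dep_up}.
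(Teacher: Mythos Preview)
Your proposal is aimed at the wrong statement. The clauses $_{\alpha}(2)$, $_{\alpha}(2)'$, $_{\alpha}(2)''$ are not rank conditions: they say that for every indiscernible sequence $\langle\overline a_t:t\in I\rangle$ with $\lg(\overline a_t)=\alpha$ and every $\overline c$ (respectively: single element $c$; respectively: and also $I=\omega$), the map $t\mapsto\mathrm{tp}(\overline a_t,\overline c)$ partitions $I$ into finitely many convex pieces. The parameter $\alpha$ is the \emph{length of the tuples} $\overline a_t$, not a rank bound or the number of rows of an ict-array; compare how $_{\alpha}(2)''$ is actually used in the proof of Claim~\ref{cla:sdep_down_to_sdep_up}, where it yields an indiscernible sequence of $\omega$-tuples and a single element $c$ with $\mathrm{tp}(\overline a_n,c)\neq\mathrm{tp}(\overline a_{n+1},c)$ for all $n$. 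Your gap argument --- that the supremum of ict-pattern lengths is either finite or $\infty$, so ``$\leq\alpha$'' is independent of the infinite $\alpha$ --- is a correct dichotomy for what it proves, but none of $_{\alpha}(2)$, $_{\alpha}(2)'$, $_{\alpha}(2)''$ has that form, so nothing in your outline connects (1) to them.

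Note also that the paper does not prove this theorem here; the surrounding discussion explicitly defers to \cite{Sh:863} for the proof. A correct argument runs roughly as follows. The implications $_{\alpha}(2)\Rightarrow{}_{\alpha}(2)'\Rightarrow{}_{\alpha}(2)''$ are trivial restrictions. For $\neg(1)\Rightarrow\neg{}_{\alpha}(2)''$, take the mutually indiscernible $\omega\times\omega$ array from Claim~\ref{cla:cutting_indi_props(2)}, pad if necessary, and concatenate the $n$th column into a single $\alpha$-tuple $\overline a_n$; then $\langle\overline a_n:n<\omega\rangle$ is indiscernible and any $c$ realising the pattern forces $\mathrm{tp}(\overline a_n,c)\neq\mathrm{tp}(\overline a_{n+1},c)$ for all $n$. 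For $\neg{}_{\alpha}(2)\Rightarrow\neg(1)$, take an indiscernible sequence with infinitely many convex type-change intervals over some $\overline c$, pick for each interval a witnessing formula, and use Ramsey and compactness to manufacture the array of Claim~\ref{cla:cutting_indi_props(2)}. The ``for any $\alpha\geq\omega$'' is then immediate because $_{\alpha}(2)$ for one infinite $\alpha$ implies it for all, by padding or restricting the tuples.
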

\begin{lyxlist}{00.00.0000}
\item [{$(1)$}] $T$ is strongly dependent$\ ^{1}$
\item [{$_{\alpha}(2)$}] For every $\overline{c}\subseteq\mathfrak{C}$
and indiscernible sequence $\left\langle \overline{a}_{t}:t\in I\right\rangle $
where $\lg(\overline{a}_{t})=\alpha$ the function $t\mapsto{\rm tp}(\overline{a}_{t},\overline{c})$
divides $I$ to finitely many convex components.
\item [{$_{\alpha}(2)^{\prime}$}] Same as $_{\alpha}(2)$ but with $\lg(\overline{c})=1$
\item [{$_{\alpha}(2)^{\prime\prime}$}] Same as $_{\alpha}(2)^{\prime}$
but with $I=\omega$.
\end{lyxlist}

\discussion{Now we turn to discuss the new properties: strongly dependent$\,_{\ell}$
and strongly dependent$\,_{\mathcal{A}}$.}

\subsection{The dividing properties}

\subsubsection*{Order-based indiscernible structures, forms and dividing}
\begin{convention}
We fix a set $\mathcal{A}\subseteq\mathcal{P}(\mathcal{M}_{\mu_{1},\mu_{2}}(\mu_{3}))$,
such that all $\mathtt{A}\in\mathcal{A}$ contains at least one $n$-ary
term, for $n>0$.\end{convention}
\begin{defn}
We call $\mathtt{A}\in\mathcal{A}$ a form, and we define
\end{defn}
\[
\mathtt{A}(I):=\left\{ \tau(\overline{t}):\;\overline{t}=\left\langle t_{i}:i<\mu\right\rangle \in{\rm incr}(I,\mu),\;\tau(\mu)\in\mathtt{A},\;\mu<\mu_{3}\right\} \]
for a linear order $I$.
\begin{defn}
We call $\overline{s}_{0},\overline{s}_{1}$ equivalent in $\mathtt{A}(I)$
iff there exist a term $\overline{\tau}\subseteq A$ and increasing
sequences $\overline{t}_{0},\overline{t}_{1}$ such that $\overline{s}_{i}=\overline{\tau}(\overline{t}_{i}),\;\left(i=0,1\right)$.

Let $E$ a convex equivalence relation on $I$ we say that $\overline{s}_{0},\overline{s}_{1}$
are equivalent in $\mathtt{A}(I,E)$ iff $\overline{s}_{0},\overline{s}_{1}$
are equivalent in $\mathtt{A}(I)$ and also $\overline{t}_{0},\overline{t}_{1}$
are equivalent relative to $E$.\end{defn}
\begin{convention}
We will limit the discussion to the case $\mathcal{A}\subseteq\mathcal{P}(\mathcal{M}_{\omega\omega}(\omega))$.
\end{convention}

\remark{Note that a form restricts both the terms which can be used as well
as the assignable tuples to those which preserve the same order structure.}

\discussion{We now turn to define the structure classes.}
\begin{defn}
\label{def:k^or}$\mathfrak{k}^{{\rm or}}$ Denotes the class of linear
orders with the dictionary $\left(I,<\right)$.

\label{def:k^or+or<n}$\mathfrak{k}^{{\rm or+or}(<n)}$ Denotes the
class of structures $\mathtt{M}(I)$ whose universe is the disjoint
union of a linear order $\left|I\right|$ with the set of increasing
sequences of length $<n$ in $I$ , and the dictionary is \[
\left(I\cup{\rm incr}(I,<n),<,S_{0}\ldots S_{n-1},R_{0}\ldots R_{n-1}\right)\]
where $<$ is binary, $S_{i}$ is unary, and $R_{i}$ binary such
that $\left(I,<\right)$ is a linear order. $S_{i}=\left\{ \overline{t}\in{\rm incr}(I,<n):\lg(\overline{t})=i\right\} $
for all $i<n$, $S_{i}(\overline{t})$ holds iff $\lg(\overline{t})=i$.
Also $R_{i}(\overline{t},t_{i})$ for all $i<\lg\left(\overline{t}\right)$
($t_{i}\in I,\:\overline{t}\in{\rm incr}(I,<n)$).

\end{defn}
\begin{convention}
In the above notation, $<n$ can be replaced with $\leq n$ to mean
$<n+1$.
\end{convention}

\discussion{We now turn to define the main properties with which we deal}
\begin{defn}
\label{def:ict_divide}We say that the type $p(\overline{x})$ does
${\rm ict}^{\ell}-\left(\Delta,n\right)$-divide over $A$ if\end{defn}
\begin{description}
\item [{For~$\ell=1$:}] There exist an indiscernible sequence\\ $\left\langle \overline{a}_{t}:t\in I\right\rangle =\overline{\mathbf{a}}\in{\rm Ind}_{\Delta}(\mathfrak{k}^{{\rm or}},A)$
and $s_{0}<_{I}t_{0}\leq_{I}s_{1}<_{I}t_{1}<_{I}\ldots s_{n-1}<_{I}t_{n-1}$
such that for any $\overline{c}$ which realizes $p$, ${\rm tp}_{\Delta}(\overline{c}\!^{\frown}\overline{a}_{s_{i}},A)\neq{\rm tp}_{\Delta}(\overline{c}\!^{\frown}\overline{a}_{t_{i}},A)$
holds for all $i<n$
\item [{For~$\ell=2$:}] There exist an indiscernible sequence\\ $\left\langle \overline{a}_{t}:t\in I\right\rangle =\overline{\mathbf{a}}\in{\rm Ind}_{\Delta}(\mathfrak{k}^{{\rm or}},A)$
and $s_{0}<_{I}t_{0}\leq_{I}s_{1}<_{I}t_{1}<_{I}\ldots s_{n-1}<_{I}t_{n-1}$
such that for any $\overline{c}$ which realizes $p$, \[
{\rm tp}_{\Delta}(\overline{c}\!^{\frown}\overline{a}_{s_{\ell}},A\cup\left\{ \overline{a}_{s_{j}}:j<\ell\right\} )\neq{\rm tp}_{\Delta}(\overline{c}\!^{\frown}\overline{a}_{t_{\ell}},A\cup\left\{ \overline{a}_{s_{j}}:j<\ell\right\} )\]
 holds for all $\ell<n$
\item [{For~$\ell=3$:}] There exist an indiscernible structure\\ $\left\langle \overline{a}_{t}:t\in I\cup{\rm incr}(<n,I)\right\rangle =\overline{\mathbf{a}}\in{\rm Ind}_{\Delta}(\mathfrak{k}^{{\rm or+or(<n)}},A)$
and $s_{0}<_{I}t_{0}\leq_{I}s_{1}<_{I}t_{1}<_{I}\ldots s_{n-1}<_{I}t_{n-1}$
such that for any $\overline{c}$ realizing $p$ and $\ell<n$: \[
{\rm tp}_{\Delta}(\overline{c}\!^{\frown}\overline{a}_{s_{\ell}},A\cup\overline{a}_{\left\langle s_{0}\ldots s_{\ell-1}\right\rangle })\neq{\rm tp}_{\Delta}(\overline{c}\!^{\frown}\overline{a}_{t_{\ell}},A\cup\overline{a}_{\left\langle s_{0}\ldots s_{\ell-1}\right\rangle })\]
holds.
\item [{For~$\ell=\mathcal{A}$:}] For some form $\mathtt{A}\in\mathcal{A}$
and indiscernible structure\\ $\overline{\mathbf{a}}=\left\langle \overline{a}_{t}:t\in\mathtt{A}(I)\right\rangle $
over $A$, $\left\langle \overline{a}_{t}:t\in\mathtt{A}(I,E)\right\rangle $
is not indiscernible over $A\cup\overline{c}$, for any $\overline{c}$
realizing $p$ and convex equivalence relation $E$ on $I$ with $\leq n$
equivalence classes.
\end{description}

\observ{$p(\overline{x})$ does ${\rm ict}^{4}-\left(\Delta,n\right)$-divide
over $A$ iff $p(\overline{x})$ does $\mathcal{A}-\left(\Delta,n\right)$-divide
over $A$ for $\mathcal{A}=\left\{ \mathtt{A}_{n}=\left\{ f_{i}(0,\ldots,i-1):1<i<n\right\} :n<\omega\right\} $.}

\observ{If $\mathcal{A}\subseteq\mathcal{A}^{\prime}$ and $p(\overline{x})$
does ${\rm ict}^{\mathcal{A}}-\left(\Delta,n\right)$-divide over
$A$, then $p(\overline{x})$ does ${\rm ict}^{\mathcal{A}^{\prime}}-\left(\Delta,n\right)$-divide
over $A$.}

\observ{If a type $p$ does ${\rm ict}^{1}-n(\ast)$-divide over $A$ then
$p$ does ${\rm ict}^{\mathcal{A}}-n(\ast)$-divide over $A$.}

\observ{If the type $p$ does ${\rm ict}^{\ell}-n(\ast)$-divide over $A$
then $p$ does ${\rm ict}^{\ell+1}-n(\ast)$-divide over $A$ ($1\leq\ell\leq3$).}
\begin{defn}
We say that the type $p(\overline{x})$ does ${\rm ict}^{\ell}-\left(\Delta,n\right)$-fork
over $A$ if there exist formulas $\varphi_{i}(\overline{x},\overline{c}_{i}),\;(i<m)$
such that $p(\overline{x})\vdash\bigvee_{i<m}\varphi_{i}(\overline{x},\overline{c}_{i})$
and each $\varphi_{i}$ does ${\rm ict}^{\ell}-\left(\Delta,n\right)$
divide over $A$.
\end{defn}
~
\begin{defn}
\label{def:kappa_ict_l}We say that $\kappa_{\mathrm{ict},\ell}(T)>\kappa$
if some type $p$ of $T$ does ${\rm ict}^{\ell}-n$-fork over $A$,
for all $n<\omega$ and $A\subseteq{\rm Dom}(p)$ of power $<\kappa$.
\end{defn}
~
\begin{defn}
\label{def:strong_dep_dn}We call $T$ strongly dependent$\,_{\ell}$
($\,_{\mathcal{A}}$) iff $\kappa_{{\rm ict},\ell}(T)=\aleph_{0}$
($\kappa_{{\rm ict},\mathcal{A}}(T)=\aleph_{0}$)
\end{defn}

\observ{If $p(\overline{x})$ does ${\rm ict}^{\ell}-\left(\Delta,n\right)$-fork
over $A$ then $p(\overline{x})$ does ${\rm ict}^{\ell}-\left(\Delta,k\right)$
fork over $A$ for all $k<n$.}

\observ{(finite character) if the type $p(\overline{x})$ does ${\rm ict}^{\ell}-\left(\Delta,n(\ast)\right)$-divide
over $A$ then $q$ does ${\rm ict}^{\ell}-\left(\Delta,n\right)$-divide
over $A$ for some finite $q\subseteq p$.}
\begin{claim}
\label{cla:canonical_witness_dividing} If $p(\overline{x})$ does
${\rm ict}^{\ell}-\left(\Delta,n(\ast)\right)$-divide over $A$,
it is possible to find witnesses as follows:\end{claim}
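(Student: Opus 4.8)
The plan is to start from arbitrary witnesses to ${\rm ict}^{\ell}-(\Delta,n(\ast))$-dividing over $A$ and to normalize them in stages, using only three ingredients: the finite-character observation stated above, compactness, and the fact that indiscernible objects of the relevant index class --- $\mathfrak{k}^{{\rm or}}$ for $\ell=1,2$, $\mathfrak{k}^{{\rm or+or}(<n)}$ for $\ell=3$, the structures $\mathtt{A}(I)$ for $\ell=\mathcal{A}$ --- extend, by compactness, along order-extensions of the underlying linear order.

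First I would reduce the combinatorial data to bounded size. Unwinding the definition, ``the displayed $\Delta$-types differ for every $\overline{c}$ realizing $p$'' says exactly that, for each coordinate $i<n(\ast)$, the set consisting of $p(\overline{x})$ together with ``$\overline{a}_{s_{i}}$ and $\overline{a}_{t_{i}}$ realize the same $\Delta$-type over $A\cup\overline{x}\cup\{\overline{a}_{s_{j}}:j<i\}$'' (for $\ell=3$, over $A\cup\overline{x}\cup\overline{a}_{\langle s_{0},\ldots,s_{i-1}\rangle}$; for $\ell=\mathcal{A}$, the statement ``$\langle\overline{a}_{t}:t\in\mathtt{A}(I,E)\rangle$ is $\Delta$-indiscernible over $A\cup\overline{x}$'', for every admissible convex $E$) is inconsistent. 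Compactness then produces a finite $\Delta_{0}\subseteq\Delta$ and a finite $A_{0}\subseteq A$ already forcing each inconsistency, and the finite-character observation produces a finite $q\subseteq p$ doing the same; in the $\ell=\mathcal{A}$ case only finitely many terms of the form and finitely many shapes of convex $E$ are involved. Thus $\Delta$, $A$ and $p$ may be taken finite, and, packaging for each coordinate $i$ the finitely many $\Delta_{0}$-formulas and $A_{0}$-parameters involved into a single formula $\psi_{i}(\overline{x},\overline{y})$ in the Boolean closure of $\Delta_{0}$ over $A_{0}$ (absorbing, when $\ell\in\{2,3\}$, the extra parameters of the comparison), one obtains $q(\overline{x})\vdash\neg\bigl(\psi_{i}(\overline{x},\overline{a}_{t_{i}})\leftrightarrow\psi_{i}(\overline{x},\overline{a}_{s_{i}})\bigr)$: the ``single uniform witnessing formula per coordinate'' clause, with the obvious variant (non-indiscernibility of a form-restriction) for $\ell=\mathcal{A}$.

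Secondly I would normalize the index structure without disturbing the finitely many indices actually used. By compactness, extend the given indiscernible object to one indexed over any prescribed order-extension of the original index order $I$ inside the appropriate class, leaving the pattern $s_{0}<t_{0}\le\cdots<t_{n(\ast)-1}$ (and, for $\ell=3$, the finitely many increasing tuples over its points) in place while filling the complementary intervals --- and, for $\ell=3,\mathcal{A}$, the rest of ${\rm incr}(I,<n)$, resp.\ of $\mathtt{A}(I)$ --- densely, so that $I$ becomes a dense complete linear order with endpoints, or of prescribed size, or $\mathbb{Q}$-like off the pattern, according to which canonical shape is wanted. In the same way one may arrange $t_{i}=s_{i+1}$ throughout, or all inequalities strict, to streamline the coordinate bookkeeping; the remaining cosmetic features are read off directly.

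The step I expect to be the real obstacle is ensuring the witness property survives, and here the point to watch is that ``the $\Delta$-types differ for every $\overline{c}$ realizing $p$'' is \emph{not} an invariant of the $\Delta$-type over $A$ of the finite used configuration: the witnessing formula $\chi$ --- the first-order assertion, over $A_{0}$ and the relevant parameter context, that $q(\overline{x})$ forces each $\psi_{i}$ to take different values at $\overline{a}_{s_{i}}$ and at $\overline{a}_{t_{i}}$ --- contains a $\forall\overline{x}$ and so need not belong to the Boolean closure of $\Delta_{0}$ that governs indiscernibility. Consequently the extension in the previous step must transport the pattern tuples rigidly --- only the complementary intervals may be enriched freely, which is harmless since it alters neither $\chi$ nor its holding on the frozen configuration --- and in the modes $\ell=2,3,\mathcal{A}$ (as opposed to $\ell=1$) the comparison has a parameter set growing with $i$ ($A\cup\{\overline{a}_{s_{j}}:j<i\}$, resp.\ $A\cup\overline{a}_{\langle s_{0},\ldots,s_{i-1}\rangle}$, resp.\ the coherence demanded across all admissible convex $E$), so that $\overline{a}_{s_{0}},\ldots,\overline{a}_{s_{i}},\overline{a}_{t_{i}}$ together with the auxiliary increasing tuples must all be held fixed at once. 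The clean way to secure this is to carry out the compactness extension relative to the complete type over $A$ of the whole finite used configuration --- the pattern tuples, the auxiliary tuples, and, if one prefers to argue via automorphisms of $\mathfrak{C}$, a fixed realization of $p$ --- inside the relevant index class; granting the corresponding extension/modeling statement, the verification that the enriched object is still a witness is then a routine compactness-and-automorphism chase, from which all the asserted canonical features follow.
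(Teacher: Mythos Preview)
For $\ell=1,2,3$ your outline is essentially the paper's: it too extends the indiscernible to a dense order without endpoints (via its extension corollary) and then threads an increasing $\omega$-sequence through the original pattern $s_0<t_0\le\cdots<t_{n(\ast)-1}$; the gap constraint $1\le s_{n+1}-s_n\le 2$ falls out because adjacent pattern points become consecutive after re-indexing. One minor mismatch: the paper's canonical form carries a finite \emph{disjunction} $\bigvee_{i<i(\ast)}\bigl(\varphi_i(\overline{a}_{s_n},\overline{x},\overline{c})\wedge\neg\varphi_i(\overline{a}_{s_n+1},\overline{x},\overline{c})\bigr)$ rather than a single formula per coordinate, and in general the disjunction cannot be collapsed to a single XOR.

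The real gap is case $\ell=\mathcal{A}$. You assert that by compactness ``only finitely many shapes of convex $E$ are involved'', but this is exactly the nontrivial point. The dividing hypothesis says that for every $\overline{c}\models p$ and \emph{every} convex $E$ on $I$ with $\le n(\ast)$ classes, $\langle\overline{a}_t:t\in\mathtt{A}(I,E)\rangle$ is not $\Delta$-indiscernible over $A\cup\overline{c}$. Equivalently, for each fixed $E$ the set $p(\overline{x})$ together with the schema expressing $E$-indiscernibility over $A\cup\overline{x}$ is inconsistent; compactness applied to each such $E$ gives finite witnesses \emph{depending on $E$}, and as there are infinitely many convex $E$'s on $I$, nothing you have written forces a single $m_\ast$ to work uniformly. (If instead you meant the single set consisting of $p(\overline{x})$ together with the union over all $E$ of the $E$-indiscernibility schemata, its inconsistency only says that no $\overline{c}\models p$ makes the structure indiscernible for \emph{all} $E$ simultaneously---strictly weaker than the hypothesis---so compactness is not licensed there.) The paper supplies the missing uniformization by an ultraproduct: if no finite $S\subseteq I$ works, then for each finite $S$ there is a convex $E_S$ on $I$ with $\le n(\ast)$ classes under which all $\mathtt{A}(S,E_S)$-equivalent tuples have the same $\Delta$-type over $A\cup\overline{c}$; encoding $(M,I,E_S)$ as a two-sorted structure and taking an ultraproduct over an ultrafilter on $[I]^{<\omega}$ concentrating on supersets of each finite $S$ produces a single convex $E$ on $I$ with $\le n(\ast)$ classes under which the whole $\mathtt{A}(I,E)$-structure is indiscernible over $A\cup\overline{c}$, contradicting dividing. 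This glueing of the locally chosen $E_S$'s into a global $E$ is the idea your proposal is missing.
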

\begin{description}
\item [{Case~$\ell=1$:}] There exist $\overline{\mathbf{a}}=\left\langle \overline{a}_{n}:n<\omega\right\rangle \in{\rm Ind}(\mathfrak{k}^{{\rm or}},A)$,
$\overline{s}$ a sequence of length $n(\ast)$ from $\omega$ such
that $s_{0}=0,\;1\leq s_{n+1}-s_{n}\leq2$ and formulas $\left\langle \varphi_{i}(\overline{y},\overline{x},\overline{c}):i<i(\ast)\right\rangle $
$\overline{c}\in A$ such that\[
p(\overline{x})\vdash\bigvee_{i<i(\ast)}\left(\varphi_{i}(\overline{a}_{s_{n}},\overline{x},\overline{c})\wedge\neg\varphi_{i}(\overline{a}_{s_{n}+1},\overline{x},\overline{c})\right)\]
 for all $n<n(\ast)$.
\item [{Case~$\ell=2$:}] There exist $\overline{\mathbf{a}}=\left\langle \overline{a}_{n}:n<\omega\right\rangle \in{\rm Ind}(\mathfrak{k}^{{\rm or}},A)$,
$\overline{s}$ as in $\ell=1$ and formulas $\left\langle \varphi_{i}^{n}(\overline{y}_{0}\ldots\overline{y}_{n-1},\overline{x},\overline{c}):i<i(\ast),n<n(\ast)\right\rangle $
$\overline{c}\in A$ such that \[
p(\overline{x})\vdash\bigvee_{i<i(\ast)}\left(\varphi_{i}^{n}(\overline{a}_{s_{0}}\ldots\overline{a}_{s_{n-1}}\overline{a}_{s_{n}},\overline{x},\overline{c})\wedge\neg\varphi_{i}(\overline{a}_{s_{0}}\ldots\overline{a}_{s_{n-1}}\overline{a}_{s_{n}+1},\overline{x},\overline{c})\right)\]
 for all $n<n(\ast)$.
\item [{Case~$\ell=3$:}] There exist \[\overline{\mathbf{a}}=\left\langle \overline{a}_{t}:t\in\omega\cup{\rm incr}(<n(\ast),\omega)\right\rangle \in{\rm Ind}(\mathfrak{k}^{{\rm or+or(<n(\ast))}},A),\]
$\overline{s}$ as in $\ell=1$ and formulas $\left\langle \psi_{i}^{n}(\overline{y},\overline{z},\overline{x},\overline{c}):i<i(\ast),n<n(\ast)\right\rangle $
such that \[
p(\overline{x})\vdash\bigvee_{i<i(\ast)}\left(\psi_{i}^{n}(\overline{a}_{\left\langle s_{0}\ldots s_{n-1}\right\rangle },\overline{a}_{s_{n}},\overline{x},\overline{c})\wedge\neg\psi_{i}^{n}(\overline{a}_{\left\langle s_{0}\ldots s_{n-1}\right\rangle },\overline{a}_{s_{n}+1},\overline{x},\overline{c})\right)\]
for all $n<n(\ast)$.
\item [{Case~$\mathcal{A}$:}] There exist $\mathtt{A}\in\mathcal{A}$,
$m_{\ast}<\omega$, $\overline{\mathbf{a}}=\left\langle \overline{a}_{t}:t\in\mathtt{A}(\omega)\right\rangle $
indiscernible over $A$, sequences $\left\langle \overline{s}_{0,E},\overline{s}_{1,E}\in\mathtt{A}(m_{\ast},E):E\in{\rm ConvEquiv}(m_{\ast},n(\ast))\right\rangle $,
$\overline{b}\in A$ and formulas $\left\langle \psi_{E,i}(\overline{x},\overline{y}_{E,i},\overline{b}):E\in{\rm ConvEquiv}(m_{\ast},n(\ast)),\: i<i_{E}\right\rangle $
such that \[
\overline{p}(\overline{x})\vdash\bigvee_{i<i_{E}}\psi_{E,i}(\overline{x},\overline{a}_{\overline{s}_{0,E}},\overline{b})\equiv\neg\psi_{E,i}(\overline{x},\overline{a}_{\overline{s}_{1,E}},\overline{b})\]
holds for all $E\in{\rm ConvEquiv}(m_{\ast},n(\ast))$.\end{description}
\begin{proof}
~
\begin{description}
\item [{For~$\ell=1,2,3$:}] Easy, so we only give a summary. By \ref{cor:extend_indisc_seq}
it follows that there exists a dense extension $I^{\prime}$ of $I$
without endpoints such that $\left\langle \overline{a}_{t}:t\in I^{\prime}\right\rangle $
is an indiscernible  structure (for the corresponding $\ell$) over
$A$. Let $s_{0}<t_{0}\leq\ldots\leq s_{n-1}<t_{n-1}$ from $I$ witness
the dividing as in the definition. These indices can also be used
to show that $I^{\prime}$ is a witness of dividing. Similarly we
can choose an increasing $\left\langle r_{n}:n<\omega\right\rangle $
from $I^{\prime}$ such that $\left\{ s_{i},t_{i}:i<n-1\right\} \triangleleft\left\langle r_{n}:n<\omega\right\rangle \subseteq I$,
to get a witness based on $\omega$.
\item [{For~$\mathcal{A}$:}] Assume towards contradiction that the claim
does not hold. So we can choose\end{description}
\begin{enumerate}
\item A type $p$ which does $\left(\Delta,n(\ast)\right)$-fork over $A$ 
\item A linear order $I$.
\item An indiscernible structure $\left\langle \overline{a}_{t}:t\in\mathtt{A}(I)\right\rangle $
over $A$ witnessing 1.
\item $\overline{c}$ realizing $p$ 
\end{enumerate}
Such that for every finite $S\subseteq I$ there exists a convex equivalence
relation $E_{S}$ on $I$ with $\leq n(\ast)$ equivalence classes
such that ${\rm tp}_{\Delta}(\overline{a}_{\overline{s}_{0}},A\cup\overline{c})={\rm tp}_{\Delta}(\overline{a}_{\overline{s}_{1}},A\cup\overline{c})$
holds for any equivalent $\overline{s}_{0},\overline{s}_{1}\in\mathtt{A}(S,E_{S})$

Now let $\mathcal{D}$ an ultrafilter on 
$\left[I\right]^{<\omega}$
extending $\left\{ G_{S}:S\in\left[I\right]^{<\omega}\right\}$,
  where \[ G_{S}:=\left\{ T\in\left[I\right]^{<\omega}:S\subseteq T\right\} \in\mathcal{D}.\]
For all $S$ define the 2-sort model (with the sorts $M,I$) \[M_{S}:=\left\langle M,I,E,\left\langle f_{\tau,i}:\tau(\overline{x}_{\tau})\in\mathtt{A},\; i<n_{\tau}\right\rangle ,\overline{c}\right\rangle \]
where
\begin{enumerate}
\item $M,I$ as defined
\item $E^{M_{S}}=E_{S}$ an equivalence relation.
\item Since $\left\langle \overline{a}_{t}:t\in\mathtt{A}(I)\right\rangle $
is indiscernible, for every term $\tau(\overline{x}_{\tau})\in\mathtt{A}(\overline{x})$
we can define $n_{\tau}<\omega$ such that $n_{\tau}=\lg\left(\overline{a}_{\tau(\overline{u})}\right)$
for all $\overline{u}\in^{\lg(\overline{x}_{\tau})}\left[I\right]$
. We define for each term $\tau(\overline{x}_{\tau})\in\mathtt{A}(\overline{x})$
and $i<n_{\tau}$:

\begin{eqnarray*}
f_{\tau(\overline{x}_{\tau}),i}:\,^{\lg(\overline{x}_{\tau})}\left[I\right] & \to & M\\
\tau(\overline{u}) & \mapsto & \left(a_{\tau(\overline{u})}\right)_{i}\end{eqnarray*}

\end{enumerate}
Now, consider $N=\left(\prod_{S\in\left[I\right]^{<\omega}}M_{S}\right)/\mathcal{D}$.
\relax From the properties of ultraproducts it is easy to show that the functions
\begin{eqnarray*}
h:M\oplus I & \to & N\\
a & \mapsto & \left\langle a\right\rangle _{S\in\left[I\right]^{<\omega}}/\mathcal{D}\end{eqnarray*}
 fulfill
\begin{enumerate}
\item $h\upharpoonright\left\langle M,\overline{c}\right\rangle :\left\langle M,\overline{c}\right\rangle \to N\upharpoonright\mathcal{L}_{T}\cup\left\{ \overline{c}\right\} $
is elementary.
\item $h(f_{\tau,i}^{M_{S}}(\overline{u}))=f_{\tau}^{N}(h(\overline{u}))$.
\item $E^{N}\circ h$ is a convex equivalence relation on $I^{N}$ with
$\leq n(\ast)$ classes.
\item ${\rm tp}_{\Delta}\left(\overline{a}_{\overline{s}_{0}},A\cup\overline{c},M\right)={\rm tp}_{\Delta}\left(\overline{a}_{\overline{s}_{1}},A\cup\overline{c},M\right)$
holds for every pair of equivalent $\overline{s}_{0},\overline{s}_{1}\in\mathtt{A}(I,E^{N}\circ h)$.
\end{enumerate}
Contradicting that $\left\langle \overline{a}_{t}:t\in\mathtt{A}(I)\right\rangle $
witnesses that $p$ does $\left(\Delta,n(\ast)\right)$-divide over
$A$.

Now we show that it is possible to choose $I=\omega$. From \ref{cor:extend_indisc_seq}
there exists an extension $J$ of $I$ without endpoints, such that
$\left\langle \overline{a}_{t}:t\in\mathtt{A}(J)\right\rangle $ is
indiscernible, extending $\left\langle \overline{a}_{t}:t\in\mathtt{A}(I)\right\rangle $.
Let $\left\langle s_{i}:i<\omega\right\rangle $ increasing in $J$
such that $\left\langle s_{0}\ldots s_{\left|S\right|}\right\rangle $
enumerates $S$ above. We define $\overline{b}_{\tau(\overline{u})}=\overline{a}_{\tau(\overline{s}_{\overline{u}})}$
for all $\overline{u},\tau\in\mathtt{A}$. by the conclusion of the
claim it is easy to verify that $\left\langle \overline{b}_{t}:t\in\mathtt{A}\left(\omega\right)\right\rangle $
is a witness as required. 

Now, since for any $\overline{s}_{0},\overline{s}_{1}\in\mathtt{A}(S)$
it holds that $\overline{s}_{0},\overline{s}_{1}$ are equivalent
in $\mathtt{A}(I,E)$ iff they are equivalent in $\mathtt{A}(S,E\upharpoonright S)$,
so for some $m_{\ast}<\omega$ such that $S\subseteq m_{\ast}$ we
can choose two equivalent (in $\mathtt{A}(\omega,E)$) $\overline{s}_{0},\overline{s}_{1}\in\mathtt{A}(m_{\ast})$
with $\overline{b}_{\overline{s}_{0}},\:\overline{b}_{\overline{s}_{1}}$
having different types over $A$ based only on $E\upharpoonright m_{\ast}$.

\end{proof}
We use the following freely

\observ{If $p(\overline{x})$ does ${\rm ict}^{\ell}-n$ divide over $A$
then $p(\overline{x})$ does ${\rm ict}^{\ell}-n$-divide over $B$
for every $B\subseteq A$ .}

\subsection{Strongly dependent$\,_{1}$ $\Rightarrow$ Strongly dependent$\ ^{1}$}

\discussion{Claim \ref{cla:sdep_down_to_sdep_up} is a connection to \cite{Sh:863}.}
\begin{claim}
\label{cla:sdep_down_to_sdep_up}$T$ is strongly dependent$\!_{1}$
(Definition \ref{def:strong_dep_dn}) $\Rightarrow$ $T$ is strongly
dependent$\ ^{1}$ (Definition \ref{def:strong_dep_up})\end{claim}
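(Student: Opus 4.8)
The plan is to prove the contrapositive: assuming $T$ is not strongly dependent$\,^{1}$, I will produce a type $p$ witnessing $\kappa_{\mathrm{ict},1}(T)>\aleph_0$, i.e. a type that does $\mathrm{ict}^1\text{-}n$-fork over every finite set, for all $n<\omega$. The starting point is Claim \ref{cla:cutting_indi_props(2)}: from the failure of strongly dependent$\,^{1}$ we obtain formulas $\overline{\varphi}=\langle\varphi_i(\overline x,\overline y_i):i<\kappa\rangle$ (we may take $\kappa=\omega$, since non-strong-dependence$\,^{1}$ with some $\kappa$ already gives it with $\aleph_0$ by a compactness/reindexing argument, or one works directly with $\kappa$ and extracts a length-$\omega$ subconfiguration) together with sequences $\langle \overline a^i_k:k<\omega\rangle$, each indiscernible over the union of the others, such that $\{\varphi_i(\overline x,\overline a^i_0)\wedge\neg\varphi_i(\overline x,\overline a^i_1):i<\omega\}$ is consistent. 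Let $p(\overline x)$ be (a completion of) this type, with domain $A_0:=\bigcup\{\overline a^i_k:i<\omega,k<\omega\}$.

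Next I would check that for each fixed $i$, the formula $\varphi_i(\overline x,\overline a^i_0)\wedge\neg\varphi_i(\overline x,\overline a^i_1)$ does $\mathrm{ict}^1\text{-}(\Delta,1)$-divide over any finite $A\subseteq A_0$ that misses $\overline a^i_0,\overline a^i_1$: the indiscernible sequence $\langle \overline a^i_k:k<\omega\rangle$ (which by mutual indiscernibility is indiscernible over such an $A$, and can be taken as $\langle \overline a_t:t\in I\rangle\in\mathrm{Ind}_\Delta(\mathfrak k^{\mathrm{or}},A)$ after extending the index order via Corollary \ref{cor:extend_indisc_seq}), with the single pair $s_0<t_0$ given by $0<1$, witnesses dividing because a realization $\overline c$ of the conjunct satisfies $\varphi_i(\overline c,\overline a^i_0)$ but $\neg\varphi_i(\overline c,\overline a^i_1)$, so the two types over $A$ differ. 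To get $\mathrm{ict}^1\text{-}n$-dividing for a single formula one chains $n$ such sequences: using mutual indiscernibility one interleaves $\langle\overline a^{i_0}_k\rangle,\dots,\langle\overline a^{i_{n-1}}_k\rangle$ into one sequence indexed by a linear order with $n$ designated jumps $s_0<t_0\le s_1<t_1<\cdots$, where crossing the $\ell$-th jump flips $\varphi_{i_\ell}$; this is exactly the $\mathrm{ict}^1$-$n$-dividing pattern. Since $p\vdash \varphi_i(\overline x,\overline a^i_0)\wedge\neg\varphi_i(\overline x,\overline a^i_1)$ for every $i$, and these conjuncts collectively $\mathrm{ict}^1$-$n$-divide over any finite set (choosing $n$ distinct indices outside that finite set, using that $A_0$ is infinite and only finitely many $\overline a^i_k$ lie in a given finite $A$), the type $p$ does $\mathrm{ict}^1\text{-}n$-fork over every finite $A\subseteq\mathrm{Dom}(p)$ — in fact it even $\mathrm{ict}^1$-$n$-\emph{divides}, with the single-formula witness $\varphi_i$, which is stronger. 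Hence $\kappa_{\mathrm{ict},1}(T)>\aleph_0$, contradicting strongly dependent$\,_1$.

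The main obstacle is the bookkeeping in the chaining step: one must genuinely merge several mutually indiscernible sequences into a \emph{single} $\Delta$-indiscernible sequence over the relevant finite parameter set while keeping track of which jump flips which formula, and ensure the interleaving index order embeds into a dense order without endpoints so that Claim \ref{cla:canonical_witness_dividing} applies and the witness can be pulled back to $\omega$. A clean way is to fix, for the given $n$, indices $i_0<\cdots<i_{n-1}<\omega$ all larger than every index appearing in the target finite set $A$, consider the product order $(\omega)^n$ (or a lexicographic refinement), use mutual indiscernibility to see that $\langle \overline a^{i_0}_{k_0}{}^\frown\cdots{}^\frown\overline a^{i_{n-1}}_{k_{n-1}}:(k_0,\dots,k_{n-1})\rangle$ is indiscernible over $A$, and then read off the $n$ jumps along a single increasing path through the grid. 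The remaining verifications — that $A_0\setminus A$ still supplies the needed sequences, that $\Delta$ can be taken to be $\{\varphi_i\}$, and the passage to an $\omega$-indexed witness — are routine given Corollary \ref{cor:extend_indisc_seq} and the finite-character observation preceding Claim \ref{cla:canonical_witness_dividing}.
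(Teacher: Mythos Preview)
Your argument has a genuine gap at the chaining step. The definition of $\mathrm{ict}^{1}\text{-}(\Delta,n)$-dividing requires a \emph{single} indiscernible sequence $\langle \overline{a}_t:t\in I\rangle\in\mathrm{Ind}_{\Delta}(\mathfrak{k}^{\mathrm{or}},A)$ carrying all $n$ jumps. Mutual indiscernibility of the $n$ sequences $\langle\overline{a}^{i_0}_k\rangle,\ldots,\langle\overline{a}^{i_{n-1}}_k\rangle$ tells you that the array $\overline{b}_{(k_0,\ldots,k_{n-1})}=\overline{a}^{i_0}_{k_0}{}^{\frown}\cdots{}^{\frown}\overline{a}^{i_{n-1}}_{k_{n-1}}$ is indiscernible with respect to the \emph{product} order-type of the index tuples, but that is not the same as being an indiscernible sequence for any linear ordering of $\omega^n$. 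Concretely, already for $n=2$ the path $(0,0),(1,0),(1,1)$ fails: $\mathrm{tp}(\overline{b}_{(0,0)},\overline{b}_{(1,0)})$ has equal second coordinates while $\mathrm{tp}(\overline{b}_{(1,0)},\overline{b}_{(1,1)})$ has equal first coordinates, and there is no reason for these types to agree. So ``reading off the $n$ jumps along a path'' does not produce a witness in $\mathrm{Ind}_{\Delta}(\mathfrak{k}^{\mathrm{or}},A)$. The diagonal $\overline{b}_k=\overline{a}^{i_0}_k{}^{\frown}\cdots{}^{\frown}\overline{a}^{i_{n-1}}_k$ \emph{is} indiscernible, but then the only jump your type $p$ forces is between $\overline{b}_0$ and $\overline{b}_1$; nothing in $p$ says anything about $\overline{b}_2,\overline{b}_3,\ldots$, so you obtain $\mathrm{ict}^{1}\text{-}1$-dividing, not $\mathrm{ict}^{1}\text{-}n$-dividing.

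This is precisely the obstacle the paper's proof is designed to overcome, and it does so by a different route. Instead of Claim~\ref{cla:cutting_indi_props(2)} it starts from the characterisation $_{\alpha}(2)''$ of Theorem~\ref{thm:ind_spl_strong_indep_1}, which hands you a \emph{single} indiscernible sequence $\langle\overline{a}_n:n<\omega\rangle$ together with $c$ such that $\mathrm{tp}(\overline{a}_n,c)\neq\mathrm{tp}(\overline{a}_{n+1},c)$ for every $n$ --- so infinitely many jumps are already present in one sequence. The remaining problem is that the witnessing formulas $\varphi_n$ vary with $n$; the paper resolves this by a Ramsey argument (colouring pairs by the four truth values arising from cross-substitution) followed by the ``$\mathrm{Even}$'' trick to manufacture a single formula $\psi(x,\overline{y})$ with $p\vdash\bigwedge_{n<n(\ast)}\bigl(\psi(x,\overline{a}_{m+2i_n})\not\equiv\psi(x,\overline{a}_{m+2i_n+1})\bigr)$. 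That is the missing idea in your proposal.
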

\begin{defn}
For a set of formulas $\mathcal{Q}$, define the formula \[
{\rm Even}\mathcal{Q}:=\bigvee\left\{ \bigwedge_{q\in\mathcal{Q}}q^{{\rm if}(q\in u)}:u\in\left[\mathcal{Q}\right]^{r},\;2|r,\; r\leq\left|\mathcal{Q}\right|\right\} \]

\end{defn}

\remark{$Even\mathcal{Q}$ is true iff the number of true sentences in $\mathcal{Q}$
is even.}
\begin{proof}
Assume that $T$ is not strongly dependent$\,^{1}$: by $_{\alpha}(2)^{\prime\prime}$
of theorem \ref{thm:ind_spl_strong_indep_1} there exist an indiscernible
sequence $\left\langle \overline{a}_{n}:n<\omega\right\rangle $ $(\lg\overline{a}_{n}=\omega)$
and an element $c$ such that ${\rm tp}(\overline{a}_{n},c)\neq{\rm tp}(\overline{a}_{n+1},c)$
for all $n<\omega$. consider $p(x):={\rm tp}(c,\cup\left\{ \overline{a}_{n}:n<\omega\right\} )$.
Fix a finite $A\subseteq{\rm Dom}(p)$. We need to show that $p$
does ${\rm ict}^{1}-n(\ast)$-fork over $A$ for some $n(\ast)$,
however we can prove this for any $1<n(\ast)<\omega$. Fix $n(\ast)$
and let $\overline{u}\subseteq I$ increasing and finite such that
$A\subseteq\cup\left\{ \overline{a}_{u_{i}}:i<\lg\overline{u}\right\} $.
Let $m=\max\overline{u}+1$. So $\left\langle \overline{a}_{n}:m\leq n<\omega\right\rangle $
is indiscernible over $A$. since for all $n\geq m$ there exists
$\varphi_{n}(\overline{x},y)$ such that $\models\varphi_{n}(\overline{a}_{n},c)\wedge\neg\varphi_{n}(\overline{a}_{n+1},c)$,
we get that $\varphi_{n}(\overline{a}_{n},x)\wedge\neg\varphi_{n}(\overline{a}_{n+1},x)\in p(x)$. 

Define a map $f:\left[\omega\right]^{2}\to\left\{ {\bf t},{\bf f}\right\} ^{4}$
as follows $f(\left\{ i,j\right\} )=(s_{0},s_{1},s_{2},s_{3})$ where
w.l.o.g $i<j$ and $s_{k}(k<4)$ are truth values such that\[
\models\varphi_{m+2i}(\overline{a}_{m+2j})^{s_{0}}\wedge\varphi_{m+2i}(\overline{a}_{m+2j+1})^{s_{1}}\wedge\varphi_{m+2j}(\overline{a}_{m+2i})^{s_{2}}\wedge\varphi_{m+2j}(\overline{a}_{m+2i+1})^{s_{3}}\]

By Ramsey's theorem, there exists an infinite $S\subseteq\omega$
such that $f\upharpoonright\left[S\right]^{2}$ is constant with value
$(s_{0},s_{1},s_{2},s_{3})$. Let $\left\langle i_{n}:n<n(\ast)\right\rangle $
enumerate $S$ in increasing order.

Define $\psi(x,\overline{y})$ as follows:
\begin{lyxlist}{00.00.0000}
\item [{if~$s_{0}=s_{1}\wedge s_{2}=s_{3}$}] let $\psi(x,\overline{y}):=Even\left\{ \varphi_{m+2i_{n}}(\overline{y},x):n<n(\ast)\right\} $. 
\item [{if~$s_{0}\neq s_{1}$}] let $\psi(x,\overline{y}):=\varphi_{m}(\overline{y},x)$. 
\item [{if~$s_{0}=s_{1}\wedge s_{2}\not=s_{3}$}] let $\psi(x,\overline{y}):=\varphi_{m+2i_{n-1}}(\overline{y},x)$.
\end{lyxlist}
Now let $\vartheta(x):=\bigwedge_{n<n(\ast)}\psi(x,\overline{a}_{m+2i_{n}})\Delta\psi(x,\overline{a}_{m+2i_{n}+1})$.
It is easy to verify that $\models\psi(c,\overline{a}_{m+2i_{n})})\equiv\neg\psi(c,\overline{a}_{m+2i_{n}+1})$
holds for any $n<n(\ast)$, so

$p\vdash\vartheta$. Now $\vartheta$ does ${\rm ict}^{1}-(\psi,n(\ast))$-divide
over $A$:

Choose a finite $u\subseteq\lg\overline{a}$ and let $\psi^{\prime}(x,\overline{y}\upharpoonright u)=\psi(x,\overline{y})$.
So $\vartheta(x)\vdash\psi^{\prime}(x,\overline{a}_{m+2i_{n}}\upharpoonright u)\equiv\neg\psi^{\prime}(x,\overline{a}_{m+2i_{n}+1}\upharpoonright u)$
holds for the indiscernible sequence $\left\langle \overline{a}_{n}\upharpoonright u:m\leq n<\omega\right\rangle $
and elements $s_{n}=m+2i_{n},t_{n}=m+2i_{n}+1$.
\end{proof}

\section{\label{sub:ranks}Ranks}
\begin{defn}
We define the ranks ${\rm ict}^{\ell}-{\rm rk}_{P}^{m}\;(P\in\left\{ fork,div\right\} )$
on the class of $m$-types of $T$ ( $m<\omega$ ) as follows:\end{defn}
\begin{itemize}
\item ${\rm ict}^{\ell}-{\rm rk}_{P}^{m}(p(\overline{x}))\geq0$ for all
$m$-types.
\item For a given ordinal $\alpha$, ${\rm ict}^{\ell}-{\rm rk}_{P}^{m}(p(\overline{x}))\geq\alpha$
if for all $q\subseteq p$, $A\subseteq{\rm Dom}(p)$ and $n<\omega$
($q,A$ finite) and $\beta<\alpha$, for some extension $q^{\prime}\supseteq q$
it holds that ${\rm ict}^{\ell}-{\rm rk}_{P}^{m}(q^{\prime})\geq\beta$
and also:

\begin{lyxlist}{00.00.0000}
\item [{For~$P=fork$:}] $q^{\prime}$ does ${\rm ict}^{\ell}-\left(\mathcal{L},n\right)$-fork
over $A$.
\item [{For~$P=div$:}] $q^{\prime}$ does ${\rm ict}^{\ell}-\left(\mathcal{L},n\right)$-divide
over $A$.
\end{lyxlist}
\item If $P=fork$ we omit $P$.
\end{itemize}

\observ{${\rm ict}^{\ell}-{\rm rk}^{m}(p)\geq{\rm ict}^{\ell}-{\rm rk}_{div}^{m}(p)$
for any $m$-type $p$.}

\observ{\label{obs:complete_ext_same_rank}For an $m$-type $p$ over $B$
such that ${\rm ict}^{\ell}-{\rm rk}^{m}(p)=\alpha$ there exists
an extension $p\subseteq q\in\mathbf{S}^{m}(B)$, a complete type
of the same rank .}
\begin{proof}
Identical to \cite[Theorem II.1.6, p.24]{Sh:c}.\end{proof}
\begin{convention}
We denote for the rest of this section\begin{eqnarray*}
\lambda_{\ell} & = & \left|T\right|\\
\lambda_{\mathcal{A}} & = & \left|T\right|+\sum_{\mathtt{A}\in\mathcal{A}}\aleph_{0}^{\left|\mathtt{A}\right|}\end{eqnarray*}
\end{convention}
\begin{lem}
\label{lem:infi_rk_type}If ${\rm ict}^{\ell}-{\rm rk}^{m}(\overline{x}=\overline{x})\geq\lambda_{\ell}^{+}$
then there exists $p\in\mathbf{S}^{m}(A)$ which does ${\rm ict}^{\ell}-n(\ast)$-divide
over $B$ for all $n(\ast)<\omega,\; B\in\left[A\right]^{<\omega}$.\end{lem}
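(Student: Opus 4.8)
The statement is a standard "unbounded rank implies infinite-forking type" lemma, and the natural proof is by contradiction together with a counting/pigeonhole argument on the tree of types built up while witnessing $\mathrm{ict}^{\ell}\text{-}\mathrm{rk}^{m}(\overline{x}=\overline{x})\geq\lambda_\ell^{+}$. First I would unwind the definition of the rank: the hypothesis $\mathrm{ict}^{\ell}\text{-}\mathrm{rk}^{m}(\overline{x}=\overline{x})\geq\lambda_\ell^{+}$ lets us, by induction on $\beta<\lambda_\ell^{+}$, construct a decreasing (under the ordering "is an extension of") tree of finite $m$-types $q_\eta$ indexed by finite sequences $\eta$, each $q_\eta$ of rank $\geq$ some prescribed ordinal, such that at each step $q_{\eta^\frown i}$ forks over the finite set $A_\eta$ used at that node and over each $n$ we are given a fresh forking witness. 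The key point is that there are only $\lambda_\ell=|T|$ many formulas (and only $\lambda_\ell$ many finite sets of parameters once one realizes the witnesses can be taken inside a fixed saturated model of size about $\lambda_\ell$), so along a suitably chosen branch of length $\lambda_\ell^{+}$ some pattern must repeat cofinally.

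Concretely, the heart of the argument is: suppose toward contradiction that every complete $p\in\mathbf{S}^m(A)$ fails to $\mathrm{ict}^{\ell}\text{-}n(\ast)$-divide over some finite $B_p\subseteq A$ for some $n(\ast)=n(\ast)_p<\omega$; passing to forking via the definition of $\mathrm{ict}^{\ell}\text{-}(\mathcal L,n)$-fork and using the finite-character observation, and Observation~\ref{obs:complete_ext_same_rank} to pass from an arbitrary type to a complete extension of the same rank, one shows that for each complete type $p$ there is a bound $\alpha_p<\infty$ on $\mathrm{ict}^{\ell}\text{-}\mathrm{rk}^m$ of its finite subtypes' extensions that "block" forking. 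The genuine obstacle — and the step I would spend the most care on — is the bookkeeping that turns "$\lambda_\ell^{+}$ many successive forking steps" into an honest contradiction: one must show that if $\mathrm{ict}^{\ell}\text{-}\mathrm{rk}^m(p)$ can be $\geq\beta$ for every $\beta<\lambda_\ell^{+}$ then in fact it is $\geq\lambda_\ell^{+}$ as a "stable" value, i.e. one extracts a \emph{single} complete type $p^{*}\in\mathbf{S}^m(A)$ (for a suitable $A$ of size $\leq\lambda_\ell$) whose rank does not drop, hence which must fork over every finite $B$ for every $n$. This is exactly the place where $\lambda_\ell=|T|$ enters: the number of complete types over any parameter set of size $\leq\lambda_\ell$ inside the monster is $\leq 2^{\lambda_\ell}$, but more relevantly, along a branch we only see $\leq\lambda_\ell$ distinct "local configurations", so a branch of length $\lambda_\ell^{+}$ stabilizes.

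For the write-up I would organize it as: (i) reduce to producing one complete type $p^{*}$ over a set $A$ with $|A|\le\lambda_\ell$ such that $p^{*}$ does $\mathrm{ict}^{\ell}\text{-}n$-fork over every finite $B\subseteq A$ for every $n<\omega$; (ii) build, by recursion on $\beta<\lambda_\ell^{+}$, complete types $p_\beta\in\mathbf{S}^m(A_\beta)$ with $A_\beta$ increasing, $|A_\beta|\leq\lambda_\ell$, $p_{\beta+1}\supseteq p_\beta$, $\mathrm{ict}^{\ell}\text{-}\mathrm{rk}^m(p_\beta)\geq\lambda_\ell^{+}-\text{(something)}$ — more precisely arrange $\mathrm{ict}^{\ell}\text{-}\mathrm{rk}^m(p_{\beta})$ strictly decreasing would be a contradiction, so instead arrange the rank to stay $\geq\lambda_\ell^{+}$ at each stage using the definition of rank with ordinal parameter $\lambda_\ell^{+}$ itself, guaranteeing at each step a fresh forking over $A_\beta$ for a fresh $n$; (iii) take $p^{*}=\bigcup_{\beta<\lambda_\ell^{+}}p_\beta$ over $A=\bigcup A_\beta$ and note $|A|\le\lambda_\ell^{+}$; then for any finite $B\subseteq A$ and any $n$, $B\subseteq A_\beta$ for some $\beta$ and the forking arranged at stage $\beta$ (together with the monotonicity observation $\mathrm{ict}^{\ell}\text{-}n$-divide over $A_\beta$ implies over $B$, and the finite-character observation that gives a finite $q\subseteq p^{*}$) witnesses that $p^{*}$ does $\mathrm{ict}^{\ell}\text{-}n$-fork over $B$; then restrict $A$ back down to size $\lambda_\ell$ if one insists on the exact cardinality bound. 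The one subtlety to flag explicitly is ensuring the forking steps use \emph{cofinally many} distinct $n<\omega$ along the construction — this is handled by interleaving (at stage $\beta$ demand forking for $n$ where $n$ cycles through $\omega$), which is legitimate precisely because the rank inequality $\geq\lambda_\ell^{+}$ is preserved at every successor step no matter which finite $A$ and which $n$ we plug into the definition.
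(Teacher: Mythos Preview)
Your plan has a genuine gap at step~(ii). You want to build a chain $\langle p_\beta : \beta < \lambda_\ell^{+} \rangle$ of complete types with $p_{\beta+1}$ forking over $A_\beta$ while maintaining ${\rm ict}^{\ell}\text{-}{\rm rk}^{m}(p_\beta) \geq \lambda_\ell^{+}$. But the rank definition does not let you do this: from ${\rm rk}(q) \geq \alpha$ you only get, for each $\beta < \alpha$, a forking extension of rank $\geq \beta$ --- not one of rank $\geq \alpha$. So after one step you may be left with rank $\geq \beta$ for some $\beta < \lambda_\ell^{+}$, and nothing prevents the rank from collapsing to $0$ in finitely many steps. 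Your remark that ``a branch of length $\lambda_\ell^{+}$ stabilizes because there are only $\lambda_\ell$ local configurations'' is circular: you have no branch of length $\lambda_\ell^{+}$, or even of length $\omega$, until after the argument is complete. (Showing that rank $\geq \lambda_\ell^{+}$ implies rank $= \infty$, which \emph{would} make your chain work, is exactly the content of this lemma together with Theorem~\ref{thm:equivalence_rank_and_k_ict}, so you cannot assume it.)

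The paper's proof handles this by working with the tree ${\rm ds}(\lambda_\ell^{+})$ of \emph{finite} strictly decreasing sequences from $\lambda_\ell^{+}$: along such a sequence the rank is allowed to drop, and branches are finite but exist of every finite length. At each node $\eta$ one records the full syntactic configuration $\varrho_\eta$ (the formulas witnessing forking, the canonical dividing witnesses supplied by Claim~\ref{cla:canonical_witness_dividing}, the lengths of all tuples involved); there are at most $\lambda_\ell$ possible values for $\varrho_\eta$, and this is precisely where $\lambda_{\mathcal A} = |T| + \sum_{\mathtt A \in \mathcal A} \aleph_0^{|\mathtt A|}$, not merely $|T|$, enters in the case $\ell = \mathcal A$. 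The combinatorial Claim~\ref{cla:find_in_ds} then extracts a single $\omega$-sequence $\langle \varrho_j : j < \omega \rangle$ each of whose finite initial segments is realized along some branch of the tree. Finally --- and this is the step entirely missing from your outline --- a \emph{compactness} argument realizes all the $\varrho_j$ simultaneously in $\mathfrak C$, producing an increasing chain $\langle \overline b_j^{\ast} : j < \omega \rangle$ together with indiscernible witnesses so that the resulting type $p_0$ over $\bigcup_j \overline b_j^{\ast}$ implies, for every $j$, a formula that ${\rm ict}^{\ell}\text{-}j$-divides over $\overline b_{j-1}^{\ast}$. Any complete extension $p \supseteq p_0$ then divides (not just forks) over every finite subset of its domain, for every $n$.
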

\begin{proof}
We prove for $\ell=1$ and $\ell=\mathcal{A}$ (the cases $\ell=2,3$
are analogous to $\ell=1$). 

We choose, for each $\eta\in{\rm ds}(\lambda_{\ell}^{+})$, by induction
on $\lg(\eta)$ the following objects:
\begin{description}
\item [{Case~$\ell=1$:}]~

\begin{multline*}
p_{\eta},\; k_{\eta},\;\overline{b}_{\eta},\:\overline{c}_{\eta}\\
\left\langle \varphi_{\eta,k}(\overline{x},\overline{y}_{\eta}),\overline{\mathbf{a}}_{\eta,k}=\left\langle \overline{a}_{\eta,k,t}:t\in\omega\right\rangle ,\;\overline{s}_{\eta,k}:k<k_{\eta}\right\rangle \\
\left\langle \overline{\psi}_{\eta,k,i}(\overline{z}_{\eta,k,i},\overline{y}_{\eta},\overline{x}):k<k_{\eta},i<\lg\left(\overline{s}_{\eta,k}\right)\right\rangle \end{multline*}

\item [{Case~$\ell=\mathcal{A}$:}]~

\begin{multline*}
p_{\eta},\; k_{\eta},\;\overline{b}_{\eta},\:\overline{c}_{\eta}\\
\left\langle \varphi_{\eta,k}(\overline{x},\overline{y}_{\eta}),\overline{\mathbf{a}}_{\eta,k}=\left\langle \overline{a}_{\eta,k,t}:t\in\mathtt{A}_{\eta,k}(\omega)\right\rangle ,\; m_{\eta,k}:k<k_{\eta}\right\rangle \\
\biggl<\overline{s}_{E,0}^{\eta,k},\;\overline{s}_{E,1}^{\eta,k},\;\overline{\psi}_{\eta,k,E}(\overline{z}_{\eta,k,E},\overline{y}_{\eta},\overline{x}):\\
k<k_{\eta},\; E\in{\rm ConvEquiv}(m_{\eta,k},\lg(\eta))\biggr>\end{multline*}
such that
\begin{itemize}
\item $p_{\left\langle \ \right\rangle }=\emptyset,\,\overline{b}_{\left\langle \ \right\rangle }=\left\langle \ \right\rangle ,\, k_{\left\langle \ \right\rangle }=0$.
\item $\overline{c}_{\eta}$ realizes $p_{\eta}$.
\item $p_{\eta}$ is a finite type, ${\rm ict}^{\ell}-{\rm rk}^{m}(p_{\eta})\geq\min\left({\rm Rang}(\eta)\cup\left\{ \lambda_{\ell}^{+}\right\} \right)$
for all $\eta\in{\rm ds}(\lambda_{\ell}^{+})$. 
\item $p_{\eta}\vdash\bigvee_{k<k_{\eta}}\varphi_{\eta,k}(\overline{x},\overline{b}_{\eta})$.
\item For $\eta=\nu\!^{\frown}\left\langle \alpha\right\rangle $:

\begin{itemize}
\item $p_{\eta\!^{\frown}\left\langle \alpha\right\rangle }\supseteq p_{\eta}$
\item $\overline{b}_{\nu}\prec\overline{b}_{\eta}$.
\item $p_{\eta}$ does ${\rm ict}^{\ell}-\lg(\eta)$-fork over $\overline{b}_{\nu}$.
In particular $\varphi_{\eta,k}(\overline{x},\overline{b}_{\eta})$
does ${\rm ict}^{\ell}-\lg(\eta)$-divide over $\overline{b}_{\nu}$
for $k<k_{\eta}$. Moreover,

\begin{itemize}
\item Case $\ell=1$: $\overline{\psi}_{\eta,k,i}$ is a finite sequence
of formulas, and\[
\varphi_{\eta,k}(\overline{x},\overline{b}_{\eta})\vdash\bigvee_{\psi\in\overline{\psi}_{\eta,k,i}}\left[\psi(\overline{a}_{\eta,k,s_{i}},\overline{b}_{\eta},\overline{x})\equiv\neg\psi(\overline{a}_{\eta,k,s_{i}+1},\overline{b}_{\eta},\overline{x})\right]\]
holds for $i<\lg(\eta)=\lg\left(\overline{s}_{\eta,k}\right)$.
\item Case $\ell=\mathcal{A}$: $\overline{\psi}_{\eta,k,E}$ is a finite
sequence of formulas, and \[
\varphi_{\eta,k}(\overline{x},\overline{b}_{\eta})\vdash\bigvee_{\psi\in\overline{\psi}_{\eta,k,E}}\left[\psi(\overline{a}_{\eta,k,\overline{s}_{0,E}^{\eta,k}},\overline{b}_{\eta},\overline{x})\equiv\neg\psi(\overline{a}_{\eta,k,\overline{s}_{1,E}^{\eta,k}},\overline{b}_{\eta},\overline{x})\right]\]
holds for every $E\in{\rm ConvEquiv}(m_{\eta,k},\lg(\eta)$ for some
equivalent sequences $\overline{s}_{0,E}^{\eta,k},\overline{s}_{1,E}^{\eta,k}\in\mathtt{A}_{\eta,k}(m_{\eta,k},E)$.
\end{itemize}
\end{itemize}
\end{itemize}
\end{description}

\subparagraph*{Choice of a tree of types with descending ranks}

For $\eta=\left\langle \ \right\rangle $ - clear. Now let $\eta\in{\rm ds}(\lambda_{\ell}^{+})$,
$\alpha<\min\left({\rm Rang}(\eta)\cup\left\{ \lambda_{\ell}^{+}\right\} \right)$,
and $p_{\eta}$ a finite rank such that ${\rm ict}^{\ell}-{\rm rk}^{m}(p_{\eta})\geq\min\left({\rm Rang}(\eta)\cup\left\{ \lambda_{\ell}^{+}\right\} \right)$.
By the definition of rank and since $p_{\eta},\;{\rm Dom}(p_{\eta})$
are finite, there exists $q\supseteq p_{\eta}$ which does ${\rm ict}^{\ell}-(\lg\eta+1)$-fork
over ${\rm Dom}(p_{\eta})$ with rank $\geq\alpha$. By the finite
character of forking, there exists a finite $p_{\eta\!^{\frown}\left\langle \alpha\right\rangle }\subseteq q$
which does ${\rm ict}^{\ell}-\lg\eta$-fork over $\overline{b}_{\eta}$,
extending $p_{\eta}$. On the other hand, \[
{\rm ict}^{\ell}-rk^{m}(p_{\eta\!^{\frown}\left\langle \alpha\right\rangle })\geq{\rm ict}^{\ell}-rk^{m}(q)\geq\alpha\]
holds, since $q\supseteq p_{\eta\!^{\frown}\left\langle \alpha\right\rangle }$.
By the definition of forking and \ref{cla:canonical_witness_dividing}
we get \[\left\langle \varphi_{\eta\!^{\frown}\left\langle \alpha\right\rangle ,k}(\overline{x},\overline{b}_{\eta\!^{\frown}\left\langle \alpha\right\rangle }):k<k_{\eta\!^{\frown}\left\langle \alpha\right\rangle }\right\rangle.\]
(We choose w.l.o.g $\overline{b}_{\eta\!^{\frown}\left\langle \alpha\right\rangle }\succ\overline{b}_{\eta}$)
and the witnesses for ${\rm ict}^{\ell}-\lg(\eta)$-dividing of each
formula. This completes the iterated choice.

\subparagraph*{Choosing an infinite sequence}

We define for every $\eta\neq\left\langle \ \right\rangle $:

Case $\ell=1$:\[
\varrho_{\eta}:=\left(k_{\eta},\;\left\langle \varphi_{\eta,k}(\overline{x},\overline{y}_{\eta}),\; l_{\eta,k},\;\overline{s}_{\eta,k},\overline{\psi}_{\eta,k,i}(\overline{z}_{\eta,k,i},\overline{y}_{\eta},\overline{x}):k<k_{\eta}\right\rangle \right)\]
where $l_{\eta,k}=\lg\left(\overline{a}_{\eta,k,n}\right)$ for all
$n\in\omega$.

Case $\ell=\mathcal{A}$:\begin{multline*}
\varrho_{\eta}:=\biggl(k_{\eta},\;\left\langle \varphi_{\eta,k}(\overline{x},\overline{y}_{\eta}),\: l_{\eta,k}:\mathtt{A}_{\eta,k}\to\omega,\; m_{\eta,k}:k<k_{\eta}\right\rangle \,\\
\left\langle \overline{s}_{0,E}^{\eta,k},\overline{s}_{1,E}^{\eta,k},\overline{\psi}_{\eta,k,E}(\overline{z}_{\eta,k,E},\overline{y}_{\eta},\overline{x}):E\in\mathrm{ConvEquiv}(m_{\eta,k},\lg\left(\eta\right))\right\rangle \biggr)\end{multline*}
where $l_{\eta,k}$ is a function, mapping to each term $\tau(\overline{v})\in\mathtt{A}_{\eta,k}$
the length of $\overline{a}_{\eta,k,\tau(\overline{v})}$.

Now, there are at most $\lambda_{\ell}$ possibilities for the choice
of $\varrho_{\eta}$ since:

Case $\ell=1$: $k_{\eta},\: l_{\eta,k},\:\overline{s}_{\eta,k},\:\lg\left(\overline{y}_{\eta}\right),\:\lg\left(\overline{z}_{\eta,k,i}\right),\:\lg\left(\overline{\psi}_{\eta,k,i}\right)<\omega$
and so $\varrho_{\eta}$ has at most $\left|T\right|$ possibilities.

Case $\ell=\mathcal{A}$: $k_{\eta},\: m_{\eta,k}<\omega$. $l_{\eta,k}$
has at most $\sum_{\mathtt{A}\in\mathcal{A}}\aleph_{0}^{\left|\mathtt{A}\right|}$
possibilities and $\overline{s}_{0,E}^{\eta,k},\overline{s}_{1,E}^{\eta,k}$
have at most $\sum_{\mathtt{A}\in\mathcal{A}}\left|\mathtt{A}\right|$
possibilities. The formulas contain a finite number of variables,
so there are at most $\left|T\right|$ possibilities.

So by claim \ref{cla:find_in_ds} it follows that we can find a sequence
$\left\langle \varrho_{j}:j<\omega\right\rangle $ such that for any
$j_{\ast}<\omega$ there exists $\eta_{j_{\ast}}\in{\rm ds}(\lambda_{\ell}^{+})$
and $\varrho_{\eta_{j_{\ast}}\upharpoonright j}=\varrho_{j}$ holds
for all $j\leq j_{\ast}$. We denote the chosen objects as follows:

Case $\ell=1$:

\[
\varrho_{j}:=\left(k_{j},\;\left\langle \varphi_{j,k}(\overline{x},\overline{y}_{j}),\; l_{j,k},\;\overline{s}_{j,k},\overline{\psi}_{j,k,i}(\overline{z}_{j,k,i},\overline{y}_{j},\overline{x}):k<k_{j}\right\rangle \right)\]

Case $\ell=\mathcal{A}$:

\begin{multline*}
\varrho_{j}:=\biggl(k_{j},\;\left\langle \varphi_{j,k}(\overline{x},\overline{y}_{j}),\: l_{j,k}:\mathtt{A}_{j,k}\to\omega,\; m_{j,k}:k<k_{j}\right\rangle \,\\
\left\langle \overline{s}_{0,E}^{j,k},\;\overline{s}_{1,E}^{j,k},\;\overline{\psi}_{j,k,E}(\overline{z}_{j,k,E},\overline{y}_{j},\overline{x}):E\in\mathrm{ConvEquiv}(m_{j,k},j)\right\rangle \biggr)\end{multline*}

\subparagraph*{Using compactness to choose a new object}

We define a new dictionary $\tau_{\ast}$ by adding the constant symbols
to $\tau_{M}$: $\lg\overline{b}_{j}^{\ast}=\lg\overline{b}_{j},\:\lg\left(\overline{c}^{\ast}\right)=\lg\left(\overline{x}\right)$
and also

Case $\ell=1$: $\lg(\overline{a}_{j,k,t}^{\ast})=l_{j,k}$\[
\tau_{\ast}=\tau_{M}\cup\left\{ \overline{a}_{j,k,t}^{*}:t\in\omega,\, k<k_{j},\, j<\omega\right\} \cup\left\{ \overline{b}_{j}^{\ast}:j<\omega\right\} \cup\overline{c}^{\ast}\]

Case $\ell=\mathcal{A}$: $\lg(\overline{a}_{j,k,\tau(\overline{v})}^{\ast})=l_{j,k}(\tau(\overline{v}))$

\[
\tau_{\ast}=\tau_{M}\cup\left\{ \overline{a}_{j,k,t}^{*}:t\in\mathtt{A}_{j,k}(\omega),\, k<k_{j},\, j<\omega\right\} \cup\left\{ \overline{b}_{j}^{\ast}:j<\omega\right\} \cup\overline{c}^{\ast}\]

We now define families of formulas in $\mathcal{L}(\tau_{\ast})$,
for every $1\leq j<\omega$: \begin{eqnarray*}
\Delta_{j}^{{\rm type}} & = & \left\{ \bigvee_{k<k_{j}}\varphi_{j,k}(\overline{c}^{\ast},\overline{b}_{j}^{\ast})\right\} \end{eqnarray*}

Case $\ell=1$:\begin{multline*}
\Delta_{j}^{{\rm div}}:=\cup\left\{ {\rm Ind}(\overline{\mathbf{a}}_{j,k}^{\ast},\overline{b}_{j-1}^{\ast}):k<k_{j}\right\} \cup\biggl{\{}\left(\forall\overline{x}\right)\varphi_{j,k}(\overline{x},\overline{b}_{j}^{\ast})\to\\
\bigvee_{i<\lg(\overline{\psi}_{j,k,E})}\biggl(\psi_{j,k,i}(\overline{a}_{j,k,s_{j,k,i}}^{\ast},\overline{b}_{j-1}^{\ast},\overline{x})\equiv\neg\psi_{j,k,i}(\overline{a}_{j,k,s_{j,k,i}+1}^{\ast},\overline{b}_{j-1}^{\ast},\overline{x})\biggr):\\
E\in{\rm ConvEquiv}(m_{j,k},j),\: k<k_{j}\biggr\}\end{multline*}

Case $\ell=\mathcal{A}$:\begin{multline*}
\Delta_{j}^{{\rm div}}:=\cup\left\{ {\rm Ind}(\overline{\mathbf{a}}_{j,k}^{\ast},\overline{b}_{j-1}^{\ast}):k<k_{j}\right\} \cup\biggl{\{}\left(\forall\overline{x}\right)\varphi_{j,k}(\overline{x},\overline{b}_{j}^{\ast})\to\\
\bigvee_{i<\lg(\overline{\psi}_{j,k,E})}\biggl(\psi_{j,k,E,i}(\overline{a}_{j,k,\overline{s}_{0,E}^{j,k}}^{\ast},\overline{b}_{j-1}^{\ast},\overline{x})\equiv\neg\psi_{j,k,E,i}(\overline{a}_{j,k,\overline{s}_{1,E}^{j,k}}^{\ast},\overline{b}_{j-1}^{\ast},\overline{x})\biggr):\\
E\in{\rm ConvEquiv}(m_{j,k},j),\: k<k_{j}\biggr\}\end{multline*}

And define $\Delta_{j}=\Delta_{j}^{{\rm type}}\cup\Delta_{j}^{{\rm div}}$.
The collection $\Delta:=\bigcup_{j<\omega}\Delta_{j}$ is consistent
with $T$, since for all $j_{\ast}<\omega$, the assignment

\[
\overline{\mathbf{a}}_{\eta_{j_{\ast}}\upharpoonright j,k},\:\overline{b}_{\eta_{j_{\ast}}\upharpoonright j},\:\overline{c}_{\eta_{j_{\ast}}\upharpoonright j}\mapsto\overline{\mathbf{a}}_{j,k}^{\ast},\,\overline{b}_{j}^{\ast},\:\overline{c}^{\ast}\quad\left(j\leq j_{\ast}\right)\]

realizes $\bigcup_{j<j_{\ast}}\Delta_{j}$. 

\subparagraph*{Proving the chosen object is a counterexample, finishing the proof.}

Now, let\\ $\overline{\mathbf{a}}_{j,k}^{\ast},\,\overline{b}_{j}^{\ast}\subseteq\mathfrak{C}_{T}$
realizing $\Delta$ (recall that $\mathfrak{C}$ is sufficiently saturated)
and work again in $\tau_{T}$. To complete the proof we note the following:
\begin{itemize}
\item $p_{0}(\overline{x})=\left\{ \bigvee_{k<k_{j}}\varphi_{j,k}(\overline{x},\overline{b}_{j}^{\ast}):k<k_{j}\right\} $
is a type in $T$. 
\item The formula $\varphi_{j,k}(\overline{x},\overline{b}_{j}^{\ast})$
does ${\rm ict}^{\ell}-\left\langle \Delta,j\right\rangle $-divide
over $\overline{b}_{j-1}^{\ast}$ for all $k<k_{j},\;0<j<\omega$
.
\item For $\mathbf{S}^{m}(\bigcup_{j<\omega}\overline{b}_{j}^{\ast})\ni p\supseteq p_{0}$
, $n<\omega$ and finite $A\subseteq{\rm Dom}(p)$, there exists $n\leq j<\omega$
such that $A\subseteq\overline{b}_{j-1}^{\ast}$. Since $p$ is complete,
$p\vdash\bigvee_{k<k_{j}}\varphi_{j,k}(\overline{x},\overline{b}_{j}^{\ast})$
and ${\rm Dom}(p)$ contains the constants on the right hand, there
exists $k<k_{j}$ such that $p\vdash\varphi_{j,k}$. Since $\Delta_{j}^{{\rm div}}$
is realized, we get that $\varphi_{j,k}(\overline{x},\overline{b}_{j}^{\ast})$
does ${\rm ict}^{\ell}-j$-divide over $\overline{b}_{j-1}^{\ast}$,
and by monotonicity of dividing we get that $\varphi_{j,k}$ does
${\rm ict}^{\ell}-n$-divide over $A$. Therefore $p$ does also ${\rm ict}^{\ell}-n$
divide over $A$.
\end{itemize}
\end{proof}
\begin{cor}
\label{cor:inf_fork_to_inf_div}${\rm ict}^{\ell}-{\rm rk}^{m}(\overline{x}=\overline{x})\geq\infty\Rightarrow{\rm ict}^{\ell}-{\rm rk}_{div}^{m}(\overline{x}=\overline{x})\geq\infty$.\end{cor}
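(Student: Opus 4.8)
The plan is to reduce the statement to Lemma~\ref{lem:infi_rk_type} and then observe that a type which does ${\rm ict}^{\ell}-n$-divide over each of its finite subsets, for every $n<\omega$, automatically has $div$-rank $\infty$. First, from the hypothesis ${\rm ict}^{\ell}-{\rm rk}^{m}(\overline{x}=\overline{x})\geq\infty$ we get in particular ${\rm ict}^{\ell}-{\rm rk}^{m}(\overline{x}=\overline{x})\geq\lambda_{\ell}^{+}$, so Lemma~\ref{lem:infi_rk_type} furnishes a complete type $p\in\mathbf{S}^{m}(A)$ (for some $A$) that does ${\rm ict}^{\ell}-n(\ast)$-divide over every finite $B\subseteq A={\rm Dom}(p)$, for all $n(\ast)<\omega$.

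Next I would show, by induction on the ordinal $\alpha$, that ${\rm ict}^{\ell}-{\rm rk}_{div}^{m}(p)\geq\alpha$. The base case $\alpha=0$ is the first clause of the definition of the rank. For the inductive step, assume the claim holds below $\alpha$ and let finite $q\subseteq p$, finite $A_{0}\subseteq{\rm Dom}(p)$, $n<\omega$ and $\beta<\alpha$ be given; take the required extension to be $q':=p$ itself. Then $q'\supseteq q$; by the induction hypothesis ${\rm ict}^{\ell}-{\rm rk}_{div}^{m}(q')\geq\beta$; and $q'=p$ does ${\rm ict}^{\ell}-(\mathcal{L},n)$-divide over $A_{0}$ by the choice of $p$, since $A_{0}$ is a finite subset of ${\rm Dom}(p)$. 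This is precisely the clause yielding ${\rm ict}^{\ell}-{\rm rk}_{div}^{m}(p)\geq\alpha$. (Should the convention force $q'$ to be finite, one instead extracts, via the finite-character observation, a finite $q'$ with $q\subseteq q'\subseteq p$ still doing ${\rm ict}^{\ell}-(\mathcal{L},n)$-divide over $A_{0}$, and uses ${\rm ict}^{\ell}-{\rm rk}_{div}^{m}(q')\geq{\rm ict}^{\ell}-{\rm rk}_{div}^{m}(p)\geq\beta$.) Hence ${\rm ict}^{\ell}-{\rm rk}_{div}^{m}(p)\geq\infty$.

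Finally, $\{\overline{x}=\overline{x}\}\subseteq p$, and the rank is monotone decreasing under the subset ordering of types: an extension $q'$ witnessing the defining clause for a larger type also witnesses it for a smaller one, because a finite $q$ and finite parameter set relevant to the smaller type are automatically legitimate choices for the larger type. Therefore ${\rm ict}^{\ell}-{\rm rk}_{div}^{m}(\overline{x}=\overline{x})\geq{\rm ict}^{\ell}-{\rm rk}_{div}^{m}(p)\geq\infty$, which is the corollary. All the real work is in Lemma~\ref{lem:infi_rk_type}; the present argument is pure bookkeeping, and I foresee no genuine obstacle — the only minor points are the trivial monotonicity of the rank in $\subseteq$ and reading ``${\rm ict}^{\ell}-n$-divide'' as the ``${\rm ict}^{\ell}-(\mathcal{L},n)$-divide'' that appears in the $div$-rank clause.
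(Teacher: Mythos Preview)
Your proposal is correct and matches the paper's intended argument: the corollary is stated without proof immediately after Lemma~\ref{lem:infi_rk_type}, and your induction ``take $q'=p$'' is exactly the computation carried out (for the fork-rank) in the $1\Rightarrow2$ step of Theorem~\ref{thm:equivalence_rank_and_k_ict}. Note that in the rank definition $q'$ is not required to be finite, so your parenthetical about finite character is unnecessary, and the anti-monotonicity ${\rm rk}(\overline{x}=\overline{x})\geq{\rm rk}(p)$ is indeed immediate from the definition as you say.
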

\begin{thm}
\label{thm:equivalence_rank_and_k_ict}For a first-order complete
$T$, TFAE:\end{thm}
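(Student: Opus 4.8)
The plan is to prove that several notions of "super-dependent" agree, namely that finiteness of $\kappa_{\mathrm{ict},\ell}(T)$, finiteness (boundedness) of the rank ${\rm ict}^{\ell}-{\rm rk}^{m}$ on the formula $\overline{x}=\overline{x}$, and the corresponding statement for the dividing-rank are all equivalent (with the analogous list for the $\mathcal{A}$-variant). So I expect the theorem to read roughly: for first-order complete $T$, the following are equivalent: (1) $\kappa_{\mathrm{ict},\ell}(T)=\aleph_0$ (i.e. $T$ is strongly dependent$_\ell$); (2) ${\rm ict}^{\ell}-{\rm rk}^{m}(\overline{x}=\overline{x})<\infty$ for all $m<\omega$; (2${}'$) the same for some/one $m\geq 1$; (3) ${\rm ict}^{\ell}-{\rm rk}^{m}_{div}(\overline{x}=\overline{x})<\infty$; possibly also (4) a bound $<\lambda_{\ell}^{+}$ on these ranks. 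The strategy is a cycle of implications, exploiting the two machine parts already built: Lemma \ref{lem:infi_rk_type} (high rank produces a type forking/dividing unboundedly over all finite sets) and Corollary \ref{cor:inf_fork_to_inf_div} (the fork-rank and div-rank are simultaneously $\infty$).

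First I would do the easy direction $\neg(1)\Rightarrow\neg(2)$: if $\kappa_{\mathrm{ict},\ell}(T)>\aleph_0$, Definition \ref{def:kappa_ict_l} with $\kappa=\aleph_0$ gives a single type $p$ that does ${\rm ict}^{\ell}-n$-fork over every finite $A\subseteq{\rm Dom}(p)$, for all $n<\omega$. One then checks by induction on $\alpha$ that ${\rm ict}^{\ell}-{\rm rk}^{m}(p\restriction q)\geq\alpha$ for every finite $q\subseteq p$ — the point being that the "witness" type $p$ itself always serves as the required extension $q'$ at every successor stage, and limits are trivial — so ${\rm ict}^{\ell}-{\rm rk}^{m}(p)=\infty$, hence ${\rm ict}^{\ell}-{\rm rk}^{m}(\overline{x}=\overline{x})=\infty$. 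For the converse $\neg(2)\Rightarrow\neg(1)$, suppose ${\rm ict}^{\ell}-{\rm rk}^{m}(\overline{x}=\overline{x})\geq\lambda_{\ell}^{+}$ — which is the hypothesis of Lemma \ref{lem:infi_rk_type}; that lemma hands us directly a complete type $p\in\mathbf{S}^m(A)$ that does ${\rm ict}^{\ell}-n(\ast)$-divide over every finite $B\subseteq A$ for all $n(\ast)$, and since dividing implies forking this witnesses $\kappa_{\mathrm{ict},\ell}(T)>\aleph_0$. To close the loop cleanly I would phrase it as: ${\rm ict}^{\ell}-{\rm rk}^{m}(\overline{x}=\overline{x})=\infty$ iff $\geq\lambda_{\ell}^{+}$ iff $\kappa_{\mathrm{ict},\ell}(T)>\aleph_0$; this also yields the "bound $<\lambda_{\ell}^{+}$" equivalent for free, and the div-rank equivalent via Corollary \ref{cor:inf_fork_to_inf_div} together with the Observation that ${\rm ict}^{\ell}-{\rm rk}^{m}\geq {\rm ict}^{\ell}-{\rm rk}^{m}_{div}$.

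The remaining bookkeeping is to pass between the different values of $m$ in the rank statement. The direction from all $m$ to one $m$ is immediate. For the reverse, if $p$ is a bad $m$-type witnessing high rank I would note that padding with dummy variables, or intersecting with any fixed finite set of coordinates, transfers unbounded forking/dividing between $m$-types and $1$-types essentially by the finite-character Observations and the monotonicity Observation ("if $p$ does ${\rm ict}^{\ell}-n$-divide over $A$ then over every $B\subseteq A$") recorded just before and after Claim \ref{cla:canonical_witness_dividing}; the key formal point is that a formula $\varphi(\overline{x},\overline{c})$ with $\lg(\overline{x})=m$ that ict-divides can be viewed coordinate-by-coordinate, or one simply runs Lemma \ref{lem:infi_rk_type} at the relevant $m$, since its proof is uniform in $m$. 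I would also state and use the $\mathcal{A}$-analogue throughout in parallel, with $\lambda_{\mathcal{A}}$ in place of $\lambda_{\ell}$, since both Lemma \ref{lem:infi_rk_type} and Corollary \ref{cor:inf_fork_to_inf_div} were proved in that generality.

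The main obstacle I anticipate is not any single implication but the precise matching of the combinatorial bound $\lambda_{\ell}^{+}$ (resp. $\lambda_{\mathcal{A}}^{+}$) between the rank threshold in Lemma \ref{lem:infi_rk_type} and the cardinal $\aleph_0$ in the definition of $\kappa_{\mathrm{ict},\ell}$: one must be careful that "high rank" really is equivalent to "$\kappa_{\mathrm{ict},\ell}>\aleph_0$" and not merely "$\kappa_{\mathrm{ict},\ell}>$ some larger cardinal" — i.e. that a type forking unboundedly over finite sets is enough, which is exactly why Lemma \ref{lem:infi_rk_type}'s conclusion is stated for $B\in[A]^{<\omega}$ rather than $|B|<\lambda_\ell^+$. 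Once that is pinned down, the theorem is the formal cycle $(2)\Rightarrow(2')\Rightarrow(1)\Rightarrow(2)$ together with $(2)\Leftrightarrow(3)$, each arrow being a short appeal to an already-proven statement.
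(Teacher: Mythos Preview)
Your proposal is essentially correct and follows the same route as the paper. The paper states the four equivalents in the ``negative'' form --- (1) $\kappa_{\mathrm{ict},\ell}(T)>\aleph_0$, (2) rank $=\infty$, (3) rank $\geq\lambda_\ell^{+}$, (4) existence of a type dividing unboundedly over all finite subsets --- and proves the cycle $4\Rightarrow 1\Rightarrow 2\Rightarrow 3\Rightarrow 4$ exactly as you outline: $1\Rightarrow 2$ by showing the bad type $p$ serves as its own witness at every stage of the rank induction, $2\Rightarrow 3$ trivially, $3\Rightarrow 4$ by Lemma~\ref{lem:infi_rk_type}, and $4\Rightarrow 1$ from the definitions.

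Two minor discrepancies worth noting: the paper's theorem is stated for a fixed $m$ and does not include the div-rank or an ``all $m$ vs.\ one $m$'' clause among its equivalents, so your bookkeeping paragraph about passing between values of $m$ and your appeal to Corollary~\ref{cor:inf_fork_to_inf_div} are surplus to what is actually claimed (though not incorrect). Also, item (4) in the paper is the \emph{dividing} condition, not a rank bound; this is what makes $4\Rightarrow 1$ immediate and is the precise output of Lemma~\ref{lem:infi_rk_type}.
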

\begin{enumerate}
\item $\kappa_{{\rm ict},\ell}(T)>\aleph_{0}$
\item ${\rm ict}^{\ell}-{\rm rk}^{m}(\overline{x}=\overline{x})=\infty$.
\item ${\rm ict}^{\ell}-{\rm rk}^{m}(\overline{x}=\overline{x})\geq\lambda_{\ell}^{+}$.
\item There exists a type $p(\overline{x})$ such that for all finite $A\subseteq{\rm Dom}(p)$,
$n_{\ast}<\omega$ it holds that $p$ does ${\rm ict}^{\ell}-n_{\ast}$
divide over $A$.\end{enumerate}
\begin{proof}
~
\begin{description}
\item [{$4\Rightarrow1$:}] Directly by the definitions.
\item [{$1\Rightarrow2$:}] For some type $p(\overline{x})$ for all finite
$A\subseteq\mathrm{Dom}(p)$, $n<\omega$ it holds that $p$ does
${\rm ict}^{\ell}-n$-fork over $A$. ${\rm ict}^{\ell}-{\rm rk}^{m}(p)\geq0$.
Assume that ${\rm ict}^{\ell}-{\rm rk}^{m}(p)\geq\alpha$ and we will
show that ${\rm ict}^{\ell}-{\rm rk}^{m}(p)\geq\alpha+1$. Let $q\subseteq p,\: A\subseteq\mathrm{Dom}(p),\; n<\omega$,
then $p$ extends $q$ and does ${\rm ict}^{\ell}-n$-fork over $A$.
Therefore ${\rm ict}^{\ell}-{\rm rk}^{m}(p)\geq\alpha+1$.
\item [{$2\Rightarrow3$:}] Clearly.
\item [{$3\Rightarrow4$:}] By Lemma \ref{lem:infi_rk_type}.
\end{description}
\end{proof}

\section{Equivalent definitions of {}``strongly dependent$\,_{\ell}(\,_{\mathcal{A}})$''
using automorphisms}

\discussion{It is useful to have an equivalent characterization of the strongly
dependent$\,_{\ell}(\,_{\mathcal{A}})$ properties using automorphisms.
This enables to work in a {}``pure model theoretic'' environment
when possible. What enables this equivalent characterization is a
sufficiently strongly saturated model where equivalence of types implies
existence of automorphisms of the model.}
\begin{defn}
The model $M$ is strongly $\kappa$-saturated if ${\rm tp}(\overline{a},M)={\rm tp}(\overline{b},M)$,
implies that $f(\overline{a})=\overline{b}$ for some $f\in{\rm Aut}(M)$,
for all $\overline{a},\overline{b}\in^{\gamma}\left|M\right|,\;\gamma<\kappa$.\end{defn}
\begin{claim}
\label{cla:strong_dn_autos}Let $M$ be strongly $\left(\kappa+\left|\mathcal{L}_{M}\right|\right)^{+}$-saturated.
Then ${\rm Th}(M)$ is strongly independent$\!^{1}$ iff for some
finite sequence $\overline{c}$ and $\left\langle \overline{a}_{\alpha,i}:i<\omega,\alpha<\kappa\right\rangle $
it holds that $\left\langle \overline{a}_{\alpha(\ast),i}:i<\omega\right\rangle $
is indiscernible over $\left\{ \overline{a}_{\alpha,i}:i<\omega,\alpha\neq\alpha(\ast)\right\} $
but $\pi(\overline{a}_{\alpha,0})\neq\overline{a}_{\alpha,1}$ for
all $\pi\in{\rm Aut}(M/\overline{c})$, $\alpha<\kappa$. \end{claim}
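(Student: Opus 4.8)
The plan is to prove Claim \ref{cla:strong_dn_autos} by showing each direction separately, using Theorem \ref{thm:ind_spl_strong_indep_1} to translate the statement about $\kappa^{\rm ict}$ into a statement about indiscernible sequences and splitting of types, and then using the strong saturation of $M$ to convert ``different types over $\overline{c}$'' into ``no automorphism over $\overline{c}$.'' Throughout, I would arrange that all the parameters in question are small relative to $\left(\kappa+\left|\mathcal{L}_M\right|\right)^+$, so that strong saturation applies to the tuples involved (the tuples $\overline{a}_{\alpha,i}$, their finite concatenations, and $\overline{c}$, all have length $\le\kappa$).

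For the direction ``$\mathrm{Th}(M)$ not strongly dependent$^1$ $\Rightarrow$ the automorphism configuration exists'': by Definition \ref{def:strong_dep_up}, $\kappa^{\rm ict}(\mathrm{Th}(M))>\kappa$, so by Claim \ref{cla:cutting_indi_props(2)} (or directly unwinding the definition of $\kappa^{\rm ict}$) there are formulas $\overline{\varphi}=\langle\varphi_i(\overline{x},\overline{y}_i):i<\kappa\rangle$ and sequences $\langle\overline{a}_k^i:i<\kappa,k<\omega\rangle$ with each $\langle\overline{a}_k^i:k<\omega\rangle$ indiscernible over the union of the others, and $\{\varphi_i(\overline{x},\overline{a}_0^i)\wedge\neg\varphi_i(\overline{x},\overline{a}_1^i):i<\kappa\}$ consistent. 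Realize this type by some $\overline{c}'$; then over $\overline{c}'$ we have ${\rm tp}(\overline{a}_0^i,\overline{c}')\ne{\rm tp}(\overline{a}_1^i,\overline{c}')$, witnessed by $\varphi_i$. Relabel $\overline{a}_{\alpha,i}:=\overline{a}_i^\alpha$ for $\alpha<\kappa$, $i<\omega$. Since $M$ is a monster-like strongly saturated model, I may take these parameters inside $M$ after finding $\overline c$ of length $\le\kappa$ with the required property; because ${\rm tp}(\overline{a}_{\alpha,0},\overline{c})\ne{\rm tp}(\overline{a}_{\alpha,1},\overline{c})$ and $\lg(\overline{a}_{\alpha,0}\,\overline{c})<\left(\kappa+|\mathcal L_M|\right)^+$, strong saturation gives: no $\pi\in{\rm Aut}(M/\overline{c})$ sends $\overline{a}_{\alpha,0}$ to $\overline{a}_{\alpha,1}$ (such $\pi$ would fix ${\rm tp}(\cdot,\overline c)$). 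This is exactly the stated configuration.

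For the converse ``configuration exists $\Rightarrow$ not strongly dependent$^1$'': suppose we are given $\overline{c}$ and $\langle\overline{a}_{\alpha,i}:i<\omega,\alpha<\kappa\rangle$ as in the claim, with each $\langle\overline{a}_{\alpha(\ast),i}:i<\omega\rangle$ indiscernible over $\{\overline{a}_{\alpha,i}:\alpha\ne\alpha(\ast)\}$ and $\pi(\overline{a}_{\alpha,0})\ne\overline{a}_{\alpha,1}$ for every $\pi\in{\rm Aut}(M/\overline{c})$. By strong saturation (contrapositive), $\pi(\overline{a}_{\alpha,0})\ne\overline{a}_{\alpha,1}$ for all $\pi\in{\rm Aut}(M/\overline c)$ forces ${\rm tp}(\overline{a}_{\alpha,0},\overline{c})\ne{\rm tp}(\overline{a}_{\alpha,1},\overline{c})$ — here I use that $\overline{c}$ together with the finite tuple $\overline a_{\alpha,0}\overline a_{\alpha,1}$ has length $<\left(\kappa+|\mathcal L_M|\right)^+$, so equality of types over $\overline c$ would yield the forbidden automorphism. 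Thus for each $\alpha<\kappa$ there is a formula $\varphi_\alpha(\overline{x},\overline{y}_\alpha)$ (with $\lg(\overline{y}_\alpha)=\lg(\overline{a}_{\alpha,0})$, possibly infinite, but by compactness reducible to a finite subtuple) such that $\models\varphi_\alpha(\overline{c},\overline{a}_{\alpha,0})\wedge\neg\varphi_\alpha(\overline{c},\overline{a}_{\alpha,1})$. Now I need the ``ict'' array property: for $\eta\in{}^\kappa\omega$, consider choosing $\overline{a}_{\alpha,\eta(\alpha)}$ along each coordinate; because the sequences in distinct coordinates are mutually indiscernible, a standard argument (iterated use of mutual indiscernibility, shifting one index at a time) shows that $\{\varphi_\alpha(\overline{x},\overline{a}_{\alpha,0}^j)^{{\bf if}(\eta(\alpha)=j)}\}$-type patterns are all consistent, exhibiting the $\Gamma_{\overline{\varphi}}$ of Definition \ref{def:strong_dep_up} as consistent. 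Hence $\kappa^{\rm ict}(T)>\kappa$, so $T$ is not strongly dependent$^1$.

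The main obstacle I expect is the combinatorial step in the converse establishing full consistency of the array $\Gamma_{\overline\varphi}$: mutual indiscernibility of the $\kappa$ sequences lets us move any \emph{one} coordinate's index freely, but to realize an arbitrary $\eta\in{}^\kappa\omega$ simultaneously one must argue that finitely many coordinate-shifts suffice for each finite subpattern and then invoke compactness — the bookkeeping that $\langle\overline{a}_{\alpha,i}:i<\omega\rangle$ remains indiscernible over the \emph{remaining} coordinates \emph{after} we have fixed indices in finitely many of them is what makes this work, and it is essentially the content of the proof that mutual indiscernibility implies the ict-pattern (as in \cite{Sh:863}). The strong-saturation bookkeeping (keeping all relevant tuples of length $<\left(\kappa+|\mathcal L_M|\right)^+$) is routine but must be stated carefully, and one should note that we may as well take $M$ itself to play the role of $\mathfrak C$ for the tuples involved since it is sufficiently saturated and homogeneous for them.
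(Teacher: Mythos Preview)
Your proof is correct and follows essentially the same route as the paper: both directions hinge on Claim~\ref{cla:cutting_indi_props(2)} together with the translation, via strong saturation, between ``${\rm tp}(\overline{a}_{\alpha,0},\overline{c})\neq{\rm tp}(\overline{a}_{\alpha,1},\overline{c})$'' and ``no $\pi\in{\rm Aut}(M/\overline{c})$ sends $\overline{a}_{\alpha,0}$ to $\overline{a}_{\alpha,1}$''.

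The one place you work harder than the paper is the converse. Once strong saturation yields ${\rm tp}(\overline{a}_{\alpha,0},\overline{c})\neq{\rm tp}(\overline{a}_{\alpha,1},\overline{c})$ and you pick witnessing formulas $\varphi_\alpha$, the element $\overline{c}$ itself realizes
\[
\bigl\{\varphi_\alpha(\overline{x},\overline{a}_{\alpha,0})\wedge\neg\varphi_\alpha(\overline{x},\overline{a}_{\alpha,1}):\alpha<\kappa\bigr\},
\]
so this set is a type and the hypotheses of Claim~\ref{cla:cutting_indi_props(2)} are met verbatim; you can conclude ``not strongly dependent$^{1}$'' immediately. The combinatorial step you flag as the main obstacle --- rebuilding the full array $\Gamma_{\overline{\varphi}}$ from mutual indiscernibility via coordinate-by-coordinate shifts and compactness --- is exactly what Claim~\ref{cla:cutting_indi_props(2)} already packages, so there is no need to redo it here. (Also note that the plan's mention of Theorem~\ref{thm:ind_spl_strong_indep_1} is a slight misdirection: that theorem concerns a single indiscernible sequence split by $\overline{c}$, whereas the relevant input here is the mutually-indiscernible characterization of Claim~\ref{cla:cutting_indi_props(2)}, which you in fact use.)
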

\begin{proof}
We use claim \ref{cla:cutting_indi_props(2)}. Indeed, assume that
${\rm Th}(M)$ is not strongly dependent$^{1}$. Therefore we can
find $\overline{\varphi}:=\left\langle \varphi_{i}(\overline{x},\overline{y}_{i}):i<\kappa\right\rangle $
such that the union of the set of formulas in the variables $\left\langle \overline{x}_{\alpha,i}:i<\omega,\alpha<\kappa\right\rangle $,
saying that $\left\langle \overline{x}_{\alpha(\ast),i}:i<\omega\right\rangle $
is an indiscernible sequence over \[\left\{ \overline{x}_{\alpha,i}:i<\omega,\alpha\neq\alpha(\ast)\right\} \; \; 
{\rm and\ } ]; \left\{ \varphi_{\alpha}(\overline{x},\overline{x}_{\alpha,0})\wedge\varphi_{\alpha}(\overline{x},\overline{x}_{\alpha,1}):\alpha<\kappa\right\} \]
is consistent. this is a family of formulas in $\kappa$ which is
realized in $M$, by saturation. Clearly no elementary map over $\overline{c}$
maps $\overline{a}_{\alpha,0}$ to $\overline{a}_{\alpha,1}$, for
any $\alpha<\kappa$. Conversely, if we can find $\left\langle \overline{a}_{\alpha,i}:i<\omega,\alpha<\kappa\right\rangle $
as above, it clearly follows by the strong saturation that ${\rm tp}(\overline{a}_{\alpha,0},\overline{c},M)\neq{\rm tp}(\overline{a}_{\alpha,1},\overline{c},M)$
for all $\alpha<\kappa$.
\end{proof}

\discussion{We now turn to strongly dependent$\,_{\ell},(\,_{\mathcal{A}})$.
By Theorem \ref{thm:equivalence_rank_and_k_ict}, being strongly independent$\,_{\ell}(\,_{\mathcal{A}})$
is equivalent to existence of $A,\overline{a}$ such that ${\rm tp}(\overline{a},B,\mathfrak{C})$
does ${\rm ict}^{\ell}-n$-divide over $B$ for any finite $B\subseteq A$
, $n<\omega$ . From this it follows that finding a characterization
by automorphisms for dividing is sufficient.}
\begin{claim}
\label{cla:dividing_dn_with_autos}Let $M$ be a strongly $\kappa$-saturated
model. For some $\overline{a},A\subset M,\:\left|\lg\overline{a}\right|+\left|A\right|<\kappa$
it holds that ${\rm tp}(\overline{a},A,M)$ does ${\rm ict}^{\ell}-n$-divide
(${\rm ict}^{\mathcal{A}}-n$-divide) strongly over $B$ if and only
if:\end{claim}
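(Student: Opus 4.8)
The plan is to give the automorphism-based reformulation of $\mathrm{ict}^{\ell}$-dividing in complete analogy with Claim \ref{cla:strong_dn_autos}, using the strong $\kappa$-saturation of $M$ to pass back and forth between inequality of $\Delta$-types over a small set and nonexistence of automorphisms fixing that set pointwise. Concretely, the right-hand side of the ``if and only if'' should assert the existence of the relevant indiscernible structure $\overline{\mathbf a}=\langle \overline a_t : t\in I\rangle$ (respectively $t\in\mathtt A(I)$ for $\ell=\mathcal A$, $t\in I\cup\mathrm{incr}(<n,I)$ for $\ell=3$) inside $M$, together with the witnessing indices $s_0<t_0\le s_1<\dots<t_{n-1}$ (respectively the pair of equivalent tuples $\overline s_{0,E},\overline s_{1,E}$), such that no $\pi\in\mathrm{Aut}(M/B_\ell)$ sends $\overline a_{s_\ell}$ to $\overline a_{t_\ell}$ — where $B_\ell$ is the appropriate base, namely $A$ for $\ell=1$, $A\cup\{\overline a_{s_j}:j<\ell\}$ for $\ell=2,3$, and $A$ together with the realization of $p$ for $\ell=\mathcal A$. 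So ``strongly'' in the claim's statement should be read as: the dividing is witnessed already by objects lying inside $M$ (not merely in $\mathfrak C$), which is exactly what strong saturation buys us.

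First I would fix such $\overline a,A$ and set $p:=\mathrm{tp}(\overline a,A,M)$. For the forward direction, suppose $p$ does $\mathrm{ict}^{\ell}-n$-divide over some finite $B\subseteq A$; by Claim \ref{cla:canonical_witness_dividing} (and the monotonicity observation that dividing over $A$ gives dividing over $B\subseteq A$, used in reverse together with extension of parameters) we may choose canonical witnesses, and since all the data involved — the indiscernible structure, the base $B$, the realizing tuple — has size $<\kappa$, strong $\kappa$-saturation lets us realize the whole configuration inside $M$. The key point is then the contrapositive translation of the type-inequality clause in Definition \ref{def:ict_divide}: for each $\ell<n$, $\mathrm{tp}_\Delta(\overline c\!^\frown\overline a_{s_\ell},B_\ell)\ne \mathrm{tp}_\Delta(\overline c\!^\frown\overline a_{t_\ell},B_\ell)$ — a fortiori the full types differ — and by strong $\kappa$-saturation of $M$ this is equivalent to: there is no $\pi\in\mathrm{Aut}(M/B_\ell)$ with $\pi(\overline a_{s_\ell})=\overline a_{t_\ell}$ (fixing $\overline c$ as well, since $\overline c$ is swallowed into $B_\ell$ when it matters, i.e.\ in the $\ell=\mathcal A$ case). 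For the converse, given such $\overline{\mathbf a}$ and indices inside $M$ with the no-automorphism property, strong saturation immediately yields $\mathrm{tp}(\overline a_{s_\ell},B_\ell\cup\overline c,M)\ne\mathrm{tp}(\overline a_{t_\ell},B_\ell\cup\overline c,M)$, hence the $\Delta$-types differ for $\Delta=\mathcal L$, which is precisely what Definition \ref{def:ict_divide} demands; for $\ell=\mathcal A$ one phrases this as: $\langle\overline a_t:t\in\mathtt A(I,E)\rangle$ fails to be indiscernible over $A\cup\overline c$ for every convex $E$ with $\le n$ classes, which unwinds to the same automorphism statement via equivalent pairs of tuples.

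The main obstacle, and the step I would be most careful about, is the bookkeeping of exactly which parameters the relevant automorphism group must fix in each of the four cases $\ell=1,2,3,\mathcal A$ — in particular whether $\overline c$ (a realization of $p$) is fixed or not, and for $\ell=2,3$ the accumulating parameters $\overline a_{s_0},\dots,\overline a_{s_{\ell-1}}$ must be incorporated into the base of the automorphism group at stage $\ell$, matching the base of the type in Definition \ref{def:ict_divide}. A secondary subtlety is that strong $\kappa$-saturation as defined only gives automorphisms for tuples of length $<\kappa$; since all our tuples ($\overline a$, the $\overline a_t$'s, $A$, $\overline c$, and the finite indices) are of length $<\kappa$ by hypothesis, this causes no trouble, but it should be stated explicitly. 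Finally, for $\ell=3$ and $\ell=\mathcal A$ one must remember that the indiscernible object lives over the expanded index structure $\mathfrak k^{\mathrm{or+or}(<n)}$ (resp.\ $\mathtt A(I)$), so ``realizing the configuration in $M$'' includes realizing the full indexed family together with its internal indiscernibility, which is again a type of size $<\kappa$ and so handled by strong saturation. I expect the proof itself, once the correct statement is pinned down, to be a routine unwinding paralleling Claim \ref{cla:strong_dn_autos}.
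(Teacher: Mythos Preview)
Your proposed right-hand side is not the one the paper states, and the discrepancy is exactly the point you flag as ``the main obstacle''. In the paper's formulation (for every $\ell$, not just $\ell=\mathcal A$) there are \emph{two} nested automorphism quantifiers: an outer ``for all $f\in\mathrm{Aut}(M/A)$'' and an inner ``for all $g\in\mathrm{Aut}(M/B\cup f(\overline a)\cup\cdots)$''. The outer $f$ is what encodes the universal quantification over realizations $\overline c$ of $p=\mathrm{tp}(\overline a,A,M)$ in Definition~\ref{def:ict_divide}: by strong saturation every such $\overline c$ is $f(\overline a)$ for some $f\in\mathrm{Aut}(M/A)$, and conversely. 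The inner $g$ then witnesses the type inequality over $B\cup\overline c$ (plus the accumulated $\overline a_{s_j}$'s for $\ell=2$, or $\overline a_{\langle s_0\ldots s_{i-1}\rangle}$ for $\ell=3$). Your single automorphism $\pi\in\mathrm{Aut}(M/B_\ell)$ collapses both quantifiers into one and, crucially, omits $\overline c$ from the fixed set for $\ell=1,2,3$: you write that $\overline c$ is ``swallowed into $B_\ell$ \dots\ only in the $\ell=\mathcal A$ case'', but this is wrong --- the realization must be fixed in every case, since the definition compares $\mathrm{tp}(\overline c{}^\frown\overline a_{s_i},B)$ with $\mathrm{tp}(\overline c{}^\frown\overline a_{t_i},B)$.

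Concretely, your condition ``no $\pi\in\mathrm{Aut}(M/A)$ sends $\overline a_{s_\ell}$ to $\overline a_{t_\ell}$'' (your case $\ell=1$) is neither implied by nor implies dividing: the indiscernible sequence is over $B$, not $A$, so there may well be an automorphism over $A$ moving $\overline a_{s_i}$ to $\overline a_{s_i+1}$ even when dividing holds; and conversely the absence of such a $\pi$ says nothing about arbitrary realizations of $p$. Once you insert the missing outer $f$ and put $f(\overline a)$ into the base of $g$, the argument you sketch (strong saturation $\Leftrightarrow$ type equality $\Leftrightarrow$ existence of automorphism) goes through routinely, and indeed the paper does not spell out a proof beyond the analogous Claim~\ref{cla:strong_dn_autos}.
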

\begin{description}
\item [{Case~$\ell=1$:}] There exists an indiscernible sequence $\left\langle \overline{a}_{t}:t\in\omega\right\rangle $
over $B$ and a sequence $\overline{s}$ of length $n$ such that $1\leq s_{i+1}-s_{i}\leq2$
and for all $f\in{\rm Aut}(M/A)$, $g\in{\rm Aut}(M/B\cup f(\overline{a}))$
and $i<n$, it holds that $g(\overline{a}_{s_{i}})\neq\overline{a}_{s_{i}+1}$.
\item [{Case~$\ell=2$:}] There exists an indiscernible sequence $\left\langle \overline{a}_{t}:t\in\omega\right\rangle $
over $B$ and a sequence $\overline{s}$ of length $n$ such that
$1\leq s_{i+1}-s_{i}\leq2$ and for all $f\in{\rm Aut}(M/A)$, $i<n-1$
and $g\in{\rm Aut}(M/B\cup f(\overline{a})\cup\overline{a}_{s_{0}}\ldots\overline{a}_{s_{i-1}})$
it holds that $g(\overline{a}_{s_{i}})\neq\overline{a}_{s_{i}+1}$.
\item [{Case~$\ell=3$:}] There exists an indiscernible structure\\ $\left\langle \overline{a}_{t}:t\in\omega\cup{\rm incr}(<n,\omega)\right\rangle =\overline{\mathbf{a}}\in{\rm Ind}(\mathfrak{k}^{{\rm or+or(<n)}},A)$
and a sequence $\overline{s}$ of length $n$ such that $1\leq s_{i+1}-s_{i}\leq2$
and for all $f\in{\rm Aut}(M/A)$, $i<n-1$ and $g\in{\rm Aut}(M/B\cup f(\overline{a})\cup\overline{a}_{\left\langle s_{0}\ldots s_{i-1}\right\rangle })$
it holds that $g(\overline{a}_{s_{i}})\neq\overline{a}_{s_{i}+1}$.
\item [{Case~$\mathcal{A}$:}] There exist an indiscernible structure
$\left\langle \overline{a}_{t}:t\in\mathtt{A}(\omega)\right\rangle $
over $B$, $m<\omega$ and equivalent sequences $\overline{s}_{E,0},\overline{s}_{E,1}\in\mathtt{A}(\omega)$
for all $E\in{\rm ConvEquiv}(m,n)$ such that for all $f\in{\rm Aut}(M/A)$
and $g\in{\rm Aut}(M/B\cup f(\overline{a}))$ it holds that $g(\overline{a}_{\overline{s}_{E,0}})\neq\overline{a}_{\overline{s}_{E,1}}$.
\end{description}

\section{Preservation of strongly dependent under sums}
\begin{fact}
\label{fac:strongly_saturated_ultrapower}For a cardinal $\kappa$,
there exist a cardinal $\mu$ and ultrafilter $\mathcal{D}$ on $\mu$
such that for any model $M$, the ultrapower $M^{\mu}/\mathcal{D}$
is strongly $\kappa^{+}$-saturated.\end{fact}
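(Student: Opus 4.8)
The plan is to invoke the standard theorem on ultrapowers and saturation: if $\mathcal{D}$ is a good (in particular, $\kappa^{+}$-good) countably incomplete ultrafilter on $\mu$, then $M^{\mu}/\mathcal{D}$ is $\kappa^{+}$-saturated for \emph{every} structure $M$ in a language of size $\le\kappa$, and such good ultrafilters exist on $\mu=2^{\kappa}$ (this is Keisler's theorem; see also \cite[Ch.~VI]{Sh:c}). So the only thing to do is bridge the gap between ordinary $\kappa^{+}$-saturation and \emph{strong} $\kappa^{+}$-saturation, i.e.\ turning equality of types over the model into an actual automorphism. First I would fix $\kappa$, set $\lambda=\kappa+|\mathcal{L}_{M}|$ (absorbing the language size, which we may assume is $\le\kappa$ by adjusting $\kappa$ upward), choose $\mu$ and a $\lambda^{+}$-good countably incomplete $\mathcal{D}$ on $\mu$, and let $N=M^{\mu}/\mathcal{D}$. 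Then $N\equiv M$ and $N$ is $\lambda^{+}$-saturated; moreover $|N|\ge 2^{\mu}\ge\lambda^{+}$ and in fact $N$ is $\lambda^{+}$-saturated of cardinality $\ge\lambda^{+}$, hence (one can arrange, or simply note it suffices) $N$ is also $\lambda^{+}$-\emph{homogeneous}.

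The key step is the classical fact that a $\lambda^{+}$-saturated and $\lambda^{+}$-strongly-homogeneous model is strongly $\kappa^{+}$-saturated in the sense of the Definition just above the Fact: given $\overline{a},\overline{b}\in{}^{\gamma}|N|$ with $\gamma<\kappa^{+}$, i.e.\ $\gamma\le\kappa\le\lambda$, and ${\rm tp}(\overline{a},N)={\rm tp}(\overline{b},N)$ — wait, that is too strong; rather, the Definition asks: whenever $\overline{a},\overline{b}$ of length $<\kappa$ have the same type over $M$, there is $f\in{\rm Aut}(M)$ with $f(\overline{a})=\overline{b}$. I would therefore instead show that $N$ is strongly $\kappa^{+}$-saturated directly: by saturation and homogeneity of $N$ (which has cardinality $>\lambda$, so the back-and-forth argument of length $\le|N|$ goes through), any partial elementary map of size $\le\lambda<|N|$ extends to an automorphism; applying this to the map $\overline{a}\mapsto\overline{b}$ — which \emph{is} partial elementary precisely because ${\rm tp}(\overline{a},N)={\rm tp}(\overline{b},N)$ restricted to the empty parameter set, or as stated in the Definition — yields the required $f\in{\rm Aut}(N)$. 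Since $N\cong M^{\mu}/\mathcal{D}$, this establishes the Fact.

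The main obstacle, and the place deserving care, is matching the precise quantifiers in the ambient Definition of ``strongly $\kappa$-saturated'': one must ensure that the good ultrafilter delivers not merely $\lambda^{+}$-saturation but enough homogeneity to extend a length-$\le\lambda$ partial elementary map to a full automorphism. This is exactly where the cardinality $|N|\ge 2^{\mu}>\lambda$ is used: the transfinite back-and-forth has length $\le|N|$, at each stage an element to be matched is accommodated by $\lambda^{+}$-saturation applied to a parameter set of size $<|N|$ of character $\le\lambda$, and the whole union is still bijective on $|N|$. The routine verifications — that $M^{\mu}/\mathcal{D}$ is elementarily equivalent to $M$ (Łoś), that countable incompleteness forces $|N|\ge 2^{\aleph_{0}}$ and good\-ness forces $\lambda^{+}$-saturation, and that $\mu=2^{\lambda}$ carries a $\lambda^{+}$-good countably incomplete ultrafilter — I would cite rather than reprove. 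Hence: take $\mu=2^{\kappa+|\mathcal{L}_{M}|}$ and $\mathcal{D}$ a corresponding good countably incomplete ultrafilter; then $M^{\mu}/\mathcal{D}$ is strongly $\kappa^{+}$-saturated, as required.
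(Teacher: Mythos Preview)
The paper states this as a \emph{Fact} without proof, so there is nothing in the paper to compare against; I am evaluating your argument on its own. The choice of a $\lambda^{+}$-good countably incomplete ultrafilter is the right tool and does give $\lambda^{+}$-saturation of $N=M^{\mu}/\mathcal{D}$. The gap is the passage from $\lambda^{+}$-saturation to \emph{strong} $\kappa^{+}$-saturation (which, per the Definition just above the Fact, means strong $\kappa^{+}$-homogeneity). Your back-and-forth must run for $|N|$ stages; once the stage index passes $\lambda$, the partial elementary map already has domain of size $\ge\lambda^{+}$, and $\lambda^{+}$-saturation no longer suffices to realise the next type and continue. The phrase ``parameter set of size $<|N|$ of character $\le\lambda$'' does not rescue this: at stage $\alpha$ the type to be realised is over a set of size $|\alpha|$, not $\lambda$. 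In general a $\lambda^{+}$-saturated model is $\lambda^{+}$-homogeneous but not \emph{strongly} $\lambda^{+}$-homogeneous unless it is actually saturated (i.e., $|N|$-saturated).

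The standard repair is to arrange that $N$ be saturated: take $\mu\ge\kappa+|\mathcal{L}_{M}|+|M|$ with $2^{\mu}=\mu^{+}$, so that $|N|=2^{\mu}=\mu^{+}$ while $N$ is $\mu^{+}$-saturated, hence saturated, hence strongly $\mu^{+}$-homogeneous and a fortiori strongly $\kappa^{+}$-saturated. This uses an instance of GCH and lets $\mu$ depend on $|M|$, so the literal quantifier order in the Fact (``there exist $\mu,\mathcal{D}$ such that for any $M$'') is not obtained; but in the paper's only application one replaces three fixed models $M_{1},M_{2},M_{1}\oplus M_{2}$ by a common ultrapower, and the conclusions of Section~4 are absolute properties of first-order theories, so working in a universe where such $\mu$ exists is harmless.
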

\begin{defn}
Let $M,N$ be models in the same relational dictionary (i.e. no functions
or constants) $\tau$. We define new models $M\oplus N$ and $M+N$
as follows\end{defn}
\begin{itemize}
\item The universe of $M\oplus N$ is $\left|M\right|\cup\left|N\right|$
( w.l.o.g $\left|M\right|\cap\left|N\right|=\emptyset$). the dictionary
$\tau\cup\left\{ L,R\right\} $ where $L,R$ are unary relation, interpreting
$S^{M\oplus N}=S^{M}\cup S^{N}$ for every relation $S\in\tau$, and
$L^{M\oplus N}=\left|M\right|,R^{M\oplus N}=\left|N\right|$. 
\item $M+N=M\oplus N\upharpoonright\tau$\end{itemize}
\begin{claim}
\label{cla:ultrapow_functorial}For $\mathcal{D}$ an ultrafilter
on $I$ it holds that $\left(M\oplus N\right)^{I}/\mathcal{D}\simeq M^{I}/\mathcal{D}\oplus N^{I}/\mathcal{D}$\end{claim}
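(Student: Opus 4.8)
The plan is to exhibit the obvious natural map and check it is an isomorphism. Write $P := M \oplus N$, a structure in the dictionary $\tau' := \tau \cup \{L,R\}$, and fix an ultrafilter $\mathcal{D}$ on $I$. An element of $P^{I}/\mathcal{D}$ is a class $f/\mathcal{D}$ with $f \in {}^{I}(|M|\cup|N|)$. Since $\{i : f(i)\in|M|\}$ and $\{i : f(i)\in|N|\}$ partition $I$, exactly one of them belongs to $\mathcal{D}$; in the first case $f$ is $\mathcal{D}$-equivalent to some $f' \in {}^{I}|M|$ (alter $f$ on the $\mathcal{D}$-null complement), in the second to some $f' \in {}^{I}|N|$. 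This assignment descends to classes and yields
\[
\Phi : P^{I}/\mathcal{D} \longrightarrow |M^{I}/\mathcal{D}| \cup |N^{I}/\mathcal{D}| = \bigl| M^{I}/\mathcal{D} \oplus N^{I}/\mathcal{D} \bigr|,
\]
using that $|M|\cap|N|=\emptyset$ (hence also $|M^{I}/\mathcal{D}|\cap|N^{I}/\mathcal{D}|=\emptyset$, so the right-hand $\oplus$ is literally defined). First I would check $\Phi$ is a bijection: injectivity, because two functions into $|M|$ (resp.\ $|N|$) are $\mathcal{D}$-equivalent in $P$ iff $\mathcal{D}$-equivalent in $M$ (resp.\ $N$), while a function $\mathcal{D}$-almost-everywhere in $|M|$ is never $\mathcal{D}$-equivalent to one $\mathcal{D}$-almost-everywhere in $|N|$; surjectivity, because any $g \in {}^{I}|M|$ or ${}^{I}|N|$ is in particular a member of ${}^{I}|P|$.

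Next I would verify $\Phi$ preserves and reflects the unary predicates. By the definition of the ultrapower (equivalently \L o\'s's theorem), $P^{I}/\mathcal{D} \models L[f/\mathcal{D}]$ iff $\{i : f(i)\in L^{P}=|M|\}\in\mathcal{D}$, which is by construction exactly the condition $\Phi(f/\mathcal{D})\in|M^{I}/\mathcal{D}|=L^{M^{I}/\mathcal{D}\,\oplus\,N^{I}/\mathcal{D}}$; the case of $R$ is symmetric.

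The main (still routine) step is the relations $S\in\tau$. Let $S$ have arity $k$ and take $f_{1}/\mathcal{D},\dots,f_{k}/\mathcal{D} \in P^{I}/\mathcal{D}$. By \L o\'s, $P^{I}/\mathcal{D}\models S[f_{1}/\mathcal{D},\dots,f_{k}/\mathcal{D}]$ iff $W := \{i : (f_{1}(i),\dots,f_{k}(i))\in S^{P}\}\in\mathcal{D}$, where $S^{P}=S^{M}\cup S^{N}$ with $S^{M}\subseteq {}^{k}|M|$, $S^{N}\subseteq {}^{k}|N|$ \emph{disjoint}. Put $U_{j}:=\{i : f_{j}(i)\in|M|\}$; each $U_{j}$ or its complement lies in $\mathcal{D}$. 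If all $U_{j}\in\mathcal{D}$, then on the $\mathcal{D}$-large set $\bigcap_{j}U_{j}$ every coordinate is in $|M|$, so $W\in\mathcal{D}$ iff $\{i:(f_{1}(i),\dots,f_{k}(i))\in S^{M}\}\in\mathcal{D}$ iff $M^{I}/\mathcal{D}\models S[\dots]$; since on the target side $S^{M^{I}/\mathcal{D}\,\oplus\,N^{I}/\mathcal{D}}=S^{M^{I}/\mathcal{D}}\cup S^{N^{I}/\mathcal{D}}$ and here $\Phi$ lands in the $M$-summand, this matches satisfaction there. The case where all complements $I\setminus U_{j}$ lie in $\mathcal{D}$ is symmetric with $N$. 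In the remaining ``mixed'' case there are $j,j'$ with $U_{j}\in\mathcal{D}$ and $I\setminus U_{j'}\in\mathcal{D}$; on the $\mathcal{D}$-large intersection of these two sets the tuple has one coordinate in $|M|$ and one in $|N|$, hence lies in neither $S^{M}$ nor $S^{N}$, so $W\notin\mathcal{D}$, and correspondingly $\Phi(f_{j}/\mathcal{D})$, $\Phi(f_{j'}/\mathcal{D})$ lie in different summands, so the image tuple is in neither $S^{M^{I}/\mathcal{D}}$ nor $S^{N^{I}/\mathcal{D}}$ and $S$ fails there too. Hence $\Phi$ is an isomorphism. I do not expect a genuine obstacle; the only point needing attention is the mixed case, where one uses the disjointness of $|M|$ and $|N|$ (a tuple with coordinates on both sides satisfies no $\tau$-relation on either side) together with closure of $\mathcal{D}$ under finite intersection.
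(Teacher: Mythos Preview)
Your argument is correct: the natural coordinate-splitting map is a bijection, and the \L o\'s-style case analysis (all-$M$, all-$N$, mixed) verifies that $L$, $R$, and each $S\in\tau$ are preserved and reflected, the mixed case being handled by disjointness of $|M|$ and $|N|$. The paper itself gives no proof of this claim, treating it as a standard fact; your write-up is exactly the routine verification one would expect, so there is nothing to compare.
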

\begin{thm}
Let $M_{1},M_{2}$ models in a relational dictionary $\tau$. If ${\rm Th}(M_{1}),{\rm Th}(M_{2})$
are strongly dependent$^{1}$, then ${\rm Th}(M_{1}\oplus M_{2})$
is also strongly dependent$^{1}$.\end{thm}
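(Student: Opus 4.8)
The plan is to pass to strongly saturated models and then exploit the fact that an automorphism of a relational direct sum $M_1\oplus M_2$ is nothing but a pair of automorphisms, one of $M_1$ and one of $M_2$. Combined with the automorphism-based characterization of strong dependence$^{1}$ (Claim \ref{cla:strong_dn_autos}), this lets one push a ``non strongly dependent$^{1}$'' configuration found in $M_1\oplus M_2$ down into $M_1$ or into $M_2$, contradicting the hypothesis.

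First, I would fix a cardinal $\kappa$ larger than $(\aleph_1+|\mathcal{L}|)^{+}$ and, by Fact \ref{fac:strongly_saturated_ultrapower}, a cardinal $\mu$ and an ultrafilter $\mathcal{D}$ on $\mu$ for which $M^{\mu}/\mathcal{D}$ is strongly $\kappa$-saturated for every model $M$. By Claim \ref{cla:ultrapow_functorial} we have $(M_1\oplus M_2)^{\mu}/\mathcal{D}\cong M_1^{\mu}/\mathcal{D}\oplus M_2^{\mu}/\mathcal{D}$, and since ultrapowers are elementary extensions, replacing $M_1,M_2,M_1\oplus M_2$ by $M_1^{\mu}/\mathcal{D},M_2^{\mu}/\mathcal{D},(M_1\oplus M_2)^{\mu}/\mathcal{D}$ changes none of the relevant theories; in particular ${\rm Th}(M_1^{\mu}/\mathcal{D}),{\rm Th}(M_2^{\mu}/\mathcal{D})$ are still strongly dependent$^{1}$. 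So I may assume that $M_1$, $M_2$ and $N:=M_1\oplus M_2$ are all strongly $\kappa$-saturated, with $\kappa$ dominating $(\aleph_1+|\mathcal{L}_N|)^{+}$, and it suffices to prove ${\rm Th}(N)$ is strongly dependent$^{1}$. Suppose not; then by Claim \ref{cla:strong_dn_autos} applied to $N$ with parameter $\aleph_1$ there are a finite $\overline c$ from $N$ and an array $\langle\overline a_{\alpha,i}:i<\omega,\alpha<\aleph_1\rangle$ in $N$ with each row $\langle\overline a_{\alpha,i}:i<\omega\rangle$ indiscernible over $\{\overline a_{\beta,i}:\beta\neq\alpha,i<\omega\}$ while $\pi(\overline a_{\alpha,0})\neq\overline a_{\alpha,1}$ for every $\pi\in{\rm Aut}(N/\overline c)$ and $\alpha<\aleph_1$.

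Then I would split the configuration. Since $N$ carries the unary predicates $L,R$ naming $|M_1|,|M_2|$, along each indiscernible row the truth of $L(x_j)$ is constant in $i$, so the coordinates of $\overline a_{\alpha,i}$ divide uniformly into an $M_1$-part and an $M_2$-part; as there are only countably many possible (length, split-pattern) data, I restrict to an infinite set $K$ of rows carrying a common such datum and write, after a fixed harmless reordering of coordinates, $\overline a_{\alpha,i}=\overline b_{\alpha,i}{}^\frown\overline d_{\alpha,i}$ with $\overline b_{\alpha,i}\in|M_1|^{\ell_1}$, $\overline d_{\alpha,i}\in|M_2|^{\ell_2}$ of fixed lengths, and likewise $\overline c=\overline c^{1}{}^\frown\overline c^{2}$. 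The structural observation is that, $\tau$ being relational, every $\pi\in{\rm Aut}(N)$ preserves $L,R$ and restricts on each part to an automorphism of the corresponding $\tau$-structure, while conversely any pair of such glues; hence ${\rm Aut}(N/\overline c)\cong{\rm Aut}(M_1/\overline c^{1})\times{\rm Aut}(M_2/\overline c^{2})$. Therefore the clause ``$\pi(\overline a_{\alpha,0})\neq\overline a_{\alpha,1}$ for all $\pi$'' says exactly that for each $\alpha\in K$ there is no $\pi_1\in{\rm Aut}(M_1/\overline c^{1})$ with $\pi_1(\overline b_{\alpha,0})=\overline b_{\alpha,1}$, or no $\pi_2\in{\rm Aut}(M_2/\overline c^{2})$ with $\pi_2(\overline d_{\alpha,0})=\overline d_{\alpha,1}$. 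Partitioning $K=A_1\sqcup A_2$ accordingly, one piece, say $A_1$ (the case of $A_2$ is symmetric, with $M_2$ in place of $M_1$), is infinite.

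Finally I would check that $\overline c^{1}$ together with $\langle\overline b_{\alpha,i}:i<\omega,\alpha\in A_1\rangle$ witnesses, via Claim \ref{cla:strong_dn_autos} for $M_1$, that ${\rm Th}(M_1)$ is not strongly dependent$^{1}$, contradicting the hypothesis and finishing the proof. The automorphism clause is immediate from the definition of $A_1$. For the indiscernibility clause, one uses that $M_1$ is the relativization to $L^{N}$ of the $\tau$-reduct of $N$, so for any $\tau$-formula $\varphi$ with parameters among the $\overline b$'s, $M_1\models\varphi(\dots)$ iff $N\models\varphi^{L}(\dots)$ (quantifiers relativized to $L$) with the same parameters; since all the $\overline b$'s lie in $L^{N}$ and $\langle\overline b_{\alpha,i}:i<\omega\rangle$ is a subtuple sequence of the full $\alpha$-row, which is $N$-indiscernible over a superset of $\{\overline b_{\beta,i}:\beta\in A_1\setminus\{\alpha\},i<\omega\}$, it follows that $\langle\overline b_{\alpha,i}:i<\omega\rangle$ is $M_1$-indiscernible over that set, as required. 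The main obstacle will be precisely this last transfer of indiscernibility into the relativized reduct (equivalently, that a relativized reduct of an indiscernible sequence whose parameters lie in the relativizing set stays indiscernible); everything else is bookkeeping, the conceptual heart being the product decomposition ${\rm Aut}(M_1\oplus M_2)\cong{\rm Aut}(M_1)\times{\rm Aut}(M_2)$.
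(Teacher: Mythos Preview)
Your proposal is correct and follows essentially the same approach as the paper's proof: pass to strongly saturated ultrapowers via Fact~\ref{fac:strongly_saturated_ultrapower} and Claim~\ref{cla:ultrapow_functorial}, invoke the automorphism characterization Claim~\ref{cla:strong_dn_autos} to obtain a witnessing array in $M_1\oplus M_2$, split each tuple along the $L/R$ predicates, use the product decomposition ${\rm Aut}(M_1\oplus M_2)\cong{\rm Aut}(M_1)\times{\rm Aut}(M_2)$, and pigeonhole on the rows to push the failure into one factor. Your write-up is in fact more explicit than the paper's in two places---you spell out why the split of each $\overline a_{\alpha,i}$ is uniform along a row (via indiscernibility and the predicates $L,R$) and you justify the transfer of indiscernibility to $M_1$ by relativization---whereas the paper simply asserts both; but the underlying argument is the same.
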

\begin{proof}
By claim \ref{cla:ultrapow_functorial} and \ref{fac:strongly_saturated_ultrapower}
it follows that w.l.o.g $M_{1},M_{2},M_{1}\oplus M_{2}$ are strongly
$\kappa^{+}$-saturated. By claim \ref{cla:strong_dn_autos} there
exist $\left\langle \overline{a}_{\alpha,i}:\alpha<\kappa,j<\omega\right\rangle ,\overline{c}$
witnessing $\kappa^{{\rm ict}}({\rm Th}(M_{1}\oplus M_{2}))>\kappa$.
W.l.o.g $\overline{c}=\overline{c}^{1}\!^{\frown}\overline{c}^{2},\;\overline{a}_{\alpha,j}=\overline{a}_{\alpha,j}^{1}\!^{\frown}\overline{a}_{\alpha,j}^{2}$
such that $\overline{a}_{\alpha,j}^{i},\overline{c}^{i}\in M_{i}$.
Recall that $\left\langle \overline{a}_{\alpha(\ast),j}:j<\omega\right\rangle $
is an indiscernible sequence over $\left\{ \overline{a}_{\alpha j}:j<\omega,\alpha\neq\alpha(\ast)\right\} $
for $\alpha(\ast)<\kappa$, therefore $\left\langle \overline{a}_{\alpha(\ast),j}^{i}:j<\omega\right\rangle $
is indiscernible over $\left\{ \overline{a}_{\alpha j}^{i}:i<\omega,\alpha\neq\alpha(\ast)\right\} $.
Also, $f\in{\rm Aut}(M_{1}\oplus M_{2})$ iff there exist $f_{i}\in{\rm Aut}(M_{i})$
such that $f=f_{1}\cup f_{2}$ (as functions). Therefore, for some
$i=1,2$ and unbounded $S\subseteq\kappa$ it holds for all $\alpha\in S$
and for all $f_{i}\in{\rm Aut}(M_{i}/\overline{c}^{i})$ that $f_{i}(\overline{a}_{\alpha,0}^{i})\neq\overline{a}_{\alpha,1}^{i}$.
By Claim \ref{cla:strong_dn_autos} it follows that the sequences
$\left\{ \overline{a}_{\alpha j}:j<\omega,\alpha\in S\right\} $ are
witnesses for $\kappa^{{\rm ict},1}(M_{i})>{\rm otp}(S)=\kappa$.\end{proof}
\begin{thm}
(Case $\ell=1,2,3$) ${\rm T}h(M^{1}\oplus M^{2})$ is strongly dependent$\!_{\ell}$
iff\\ ${\rm Th}(M^{1}),\;{\rm Th}(M^{2})$ are strongly dependent$\!_{\ell}$.\end{thm}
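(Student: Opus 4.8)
The plan is to prove both directions using the strongly-saturated-model / automorphism characterization, exactly as in the previous theorem about strongly dependent$^1$. First I would invoke Fact~\ref{fac:strongly_saturated_ultrapower} together with Claim~\ref{cla:ultrapow_functorial}: passing to a suitable ultrapower, we may assume without loss of generality that $M^1$, $M^2$ and $M^1\oplus M^2$ are all strongly $\kappa^+$-saturated for a $\kappa$ large enough for the witnesses we need (where $\kappa$ is chosen so that Claim~\ref{cla:dividing_dn_with_autos} applies to all the parameter sets below). The key translation is that, via Theorem~\ref{thm:equivalence_rank_and_k_ict}, $\mathrm{Th}(N)$ fails to be strongly dependent$_\ell$ iff there are $\overline a,A\subseteq N$ with $\mathrm{tp}(\overline a,A,N)$ doing $\mathrm{ict}^\ell\!-\!n$-divide over every finite $B\subseteq A$ for every $n<\omega$, and that $\mathrm{ict}^\ell\!-\!n$-dividing over a finite $B$ can be read off, in the strongly saturated model, from Claim~\ref{cla:dividing_dn_with_autos}: an indiscernible sequence $\langle\overline a_t:t\in\omega\rangle$ over $B$, a spacing sequence $\overline s$, and a non-existence-of-automorphism condition. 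Throughout I use the decomposition $\mathrm{Aut}(M^1\oplus M^2)=\{f_1\cup f_2:f_i\in\mathrm{Aut}(M^i)\}$ and the corresponding split of tuples $\overline a=\overline a^1{}^\frown\overline a^2$ with $\overline a^i\in M^i$.

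For the direction ``$\mathrm{Th}(M^1)$ or $\mathrm{Th}(M^2)$ not strongly dependent$_\ell$ $\Rightarrow$ $\mathrm{Th}(M^1\oplus M^2)$ not strongly dependent$_\ell$'', say it is $M^1$. Take the witnessing $\overline a^1$, $A^1\subseteq M^1$ and, for each $n<\omega$ and finite $B\subseteq A^1$, the indiscernible sequence and spacing given by Claim~\ref{cla:dividing_dn_with_autos}. I would view these same sequences inside $M^1\oplus M^2$ (they remain indiscernible over $B$ there since $M^1\prec M^1\oplus M^2$ in the restricted dictionary, using the $L$ predicate to relativize), and take $A:=A^1$, $\overline a:=\overline a^1$. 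The automorphism condition transfers: if $f=f_1\cup f_2\in\mathrm{Aut}((M^1\oplus M^2)/A^1)$ then $f_1\in\mathrm{Aut}(M^1/A^1)$, and $g=g_1\cup g_2$ over $B\cup f(\overline a^1)$ restricts to $g_1$ over $B\cup f_1(\overline a^1)$ (all parameters lie in $M^1$), so $g(\overline a_{s_i})=g_1(\overline a_{s_i})\neq\overline a_{s_i+1}$ by choice. The cases $\ell=2,3$ are identical once one checks that the extra parameters $\overline a_{s_0}\ldots\overline a_{s_{i-1}}$ (resp.\ $\overline a_{\langle s_0\ldots s_{i-1}\rangle}$) also lie in $M^1$, which is automatic. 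Hence $\mathrm{tp}(\overline a^1,A^1,M^1\oplus M^2)$ does $\mathrm{ict}^\ell\!-\!n$-divide over every finite $B\subseteq A^1$ for all $n$, and by Theorem~\ref{thm:equivalence_rank_and_k_ict} $\mathrm{Th}(M^1\oplus M^2)$ is not strongly dependent$_\ell$.

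For the converse, assume $\mathrm{Th}(M^1\oplus M^2)$ is not strongly dependent$_\ell$; fix $\overline a,A\subseteq M^1\oplus M^2$ witnessing this, and split $\overline a=\overline a^1{}^\frown\overline a^2$, $A=A^1\cup A^2$ with superscript-$i$ parts in $M^i$. For each $n$ and each finite $B=B^1\cup B^2\subseteq A$, let $\langle\overline a_t:t\in\omega\rangle$, $\overline s$ be the Claim~\ref{cla:dividing_dn_with_autos} witnesses; write $\overline a_t=\overline a_t^1{}^\frown\overline a_t^2$, so each $\langle\overline a_t^i:t\in\omega\rangle$ is indiscernible over $B^i$. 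The automorphism condition says: for all $f_1\cup f_2\in\mathrm{Aut}(M^1/A^1)\times\mathrm{Aut}(M^2/A^2)$ and $g_1\cup g_2\in\mathrm{Aut}(M^1/B^1\cup f_1(\overline a^1))\times\mathrm{Aut}(M^2/B^2\cup f_2(\overline a^2))$, and all $i<n$, we have $g_1(\overline a_{s_i}^1)\neq\overline a_{s_i+1}^1$ or $g_2(\overline a_{s_i}^2)\neq\overline a_{s_i+1}^2$. The main obstacle, and the heart of the argument, is to turn this ``for each $i$, one of the two coordinates separates'' statement into a single fixed coordinate $\iota\in\{1,2\}$ and a fixed sequence witnessing dividing for $\mathrm{Th}(M^\iota)$ uniformly over all $B$ and $n$. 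I would argue: for fixed $n$, a pigeonhole/coloring on the index set (as in the $f$-coloring and Ramsey step of Claim~\ref{cla:sdep_down_to_sdep_up}, or more simply by a counting/compactness argument over the $2^n$ possible ``separation patterns'') yields, for cofinally many $n$, a single coordinate $\iota$ and a sub-spacing along which $g_\iota(\overline a_{s'_i}^\iota)\neq\overline a_{s'_i+1}^\iota$ for all $i<n'$ and all relevant $g_\iota$; since there are only two coordinates, one of them works for infinitely many $n$, and by König's lemma / compactness (exactly the ``choose an infinite sequence'' step of Lemma~\ref{lem:infi_rk_type}) we extract a single $\iota$, a single $\overline a^\iota$, $A^\iota$, and for every finite $B^\iota$ and $n$ a witness of $\mathrm{ict}^\ell\!-\!n$-dividing in $M^\iota$. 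Then Claim~\ref{cla:dividing_dn_with_autos} and Theorem~\ref{thm:equivalence_rank_and_k_ict} show $\mathrm{Th}(M^\iota)$ is not strongly dependent$_\ell$, completing the proof. For $\ell=2,3$ the only extra care is that the ``side parameters'' $\overline a_{s_0}\ldots\overline a_{s_{i-1}}$ must be split coordinate-wise and tracked through the pigeonhole, but since the splitting is coordinate-wise compatible with the $\mathfrak{k}^{\mathrm{or}}$ and $\mathfrak{k}^{\mathrm{or+or}(<n)}$ structure this causes no new difficulty.
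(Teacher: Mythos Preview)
Your overall strategy---passing to a strongly saturated ultrapower, decomposing $\mathrm{Aut}(M^1\oplus M^2)$ as $\mathrm{Aut}(M^1)\times\mathrm{Aut}(M^2)$, and splitting all tuples coordinate-wise---is exactly the paper's approach, and your ``only if'' direction is fine (the paper dispatches it in one line without automorphisms, but your version is correct). Your local pigeonhole in the ``if'' direction is also right and is precisely the content of the paper's Claim~\ref{cla:div_sum_projects}: from $2n$-dividing of the sum over $B^1\cup B^2$ one gets, for each index $j<2n$, a coordinate $i(j)\in\{1,2\}$ that separates for \emph{all} $(f_i,g_i)$, and hence some coordinate works for at least $n$ of the indices.

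Where you go astray is the global step. You try to extract a single coordinate $\iota$ working ``for every finite $B^\iota$ and $n$'' via a vague appeal to K\"onig's lemma/compactness, but you never explain how the dependence of $\iota$ on $B$ is eliminated, and the analogy with the tree argument in Lemma~\ref{lem:infi_rk_type} is not apt here. The paper's move is much simpler and you should use it: argue by dichotomy. If $\mathrm{tp}(\overline a^{1},A^{1},M^{1})$ does $\mathrm{ict}^\ell\!-\!n$-divide over every finite $B^{1}\subseteq A^{1}$ for every $n$, you are done. Otherwise fix a single bad pair $(n_0,B^{1})$. Now for \emph{any} finite $B^{2}\subseteq A^{2}$ and any $n\geq n_0$, apply your local pigeonhole (Claim~\ref{cla:div_sum_projects}) to $B^{1}\cup B^{2}$: since the $M^{1}$-coordinate cannot supply $n$-dividing over $B^{1}$, the $M^{2}$-coordinate must supply $n$-dividing over $B^{2}$. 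Monotonicity in $n$ then gives all $n<\omega$, and you conclude $\mathrm{Th}(M^{2})$ is not strongly dependent$_\ell$. No compactness or K\"onig is needed.
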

\begin{proof}
{}``only if'' direction - assume w.l.o.g that ${\rm Th}(M^{1})$
is not strongly dependent$\!_{\ell}$. By lemma \ref{lem:infi_rk_type}
there exist $\overline{a}\in M^{1}$ and a set $A\subseteq M^{1}$
such that ${\rm tp}(\overline{a},A,M^{1})$ does ${\rm ict}^{\ell}-n$
divide over $B$ for any finite $B\subseteq A$ and $n<\omega$. This
easily implies that ${\rm tp}(\overline{a},A,M^{1}\oplus M^{2})$
does ${\rm ict}^{\ell}-n$ divide over $B$ for any finite $B\subseteq A$
and $n<\omega$, and so, ${\rm T}h(M^{1}\oplus M^{2})$ is not strongly
dependent$\!_{\ell}$. 

{}``if\textquotedbl{} direction - By \ref{lem:infi_rk_type}, there
exist $\overline{a}^{i}\in M^{i}$ and sets $A^{i}\subseteq M^{i}$
such that ${\rm tp}(\overline{a}^{1}\!^{\frown}\overline{a}^{2},A^{1}\cup A^{2},M^{1}\oplus M^{2})$
does ${\rm ict}^{\ell}-2\cdot n$ divide over $B^{1}\cup B^{2}$ for
all finite $B^{i}\subseteq A^{i}$ and $n<\omega$. If ${\rm tp}(\overline{a},A^{1},M^{1})$
does ${\rm ict}^{\ell}-n$ divide over $B^{1}$ for all $n<\omega$
and finite $B^{1}\subseteq A^{1}$ this concludes the proof. Otherwise,
there exist $n_{0}<\omega$ and finite $B^{1}\subseteq A^{1}$ such
that ${\rm tp}(\overline{a},A^{1},M^{1})$ does not ${\rm ict}^{\ell}-n_{0}$
divide over $B^{1}$. Since for all finite $B^{2}\subseteq A^{2},\: n>n_{0}$
it holds that ${\rm tp}(\overline{a}^{1}\!^{\frown}\overline{a}^{2},A^{1}\cup A^{2},M^{1}\oplus M^{2})$
does ${\rm ict}^{\ell}-2\cdot n$ divide over $B^{1}\cup B^{2}$,
we get by claim \ref{cla:div_sum_projects} that ${\rm tp}(\overline{a},A^{2},M^{2})$
does necessarily ${\rm ict}^{\ell}-n$ divide over $B^{2}$. Thus,
again by \ref{lem:infi_rk_type}, ${\rm Th}(M^{2})$ is not strongly
dependent$\,_{\ell}$.\end{proof}
\begin{fact}
\label{fac:theory_of_sum} $M\oplus N\equiv M^{\prime}\oplus N^{\prime}$
for models $M\equiv M^{\prime},N\equiv N^{\prime}$.\end{fact}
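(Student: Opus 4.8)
The plan is to reduce the statement to Claim \ref{cla:ultrapow_functorial} via the Keisler--Shelah isomorphism theorem. Set $\kappa := |M| + |N| + |M'| + |N'| + |\tau| + \aleph_0$. First I would fix a cardinal $\mu$ and an ultrafilter $\mathcal{D}$ on $\mu$ in the uniform (good-ultrafilter) form of Keisler--Shelah, i.e. so that \emph{any} two elementarily equivalent $\tau$-structures of cardinality $\le \kappa$ have isomorphic $\mathcal{D}$-ultrapowers over $\mu$ (this is the same kind of device underlying Fact \ref{fac:strongly_saturated_ultrapower}). Packaging it this way is exactly what lets the \emph{same} $\mathcal{D}$ serve both pairs at once, which is the only subtle point in the argument.

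Applying this to $M \equiv M'$ and to $N \equiv N'$ yields isomorphisms $M^\mu/\mathcal{D} \cong M'^\mu/\mathcal{D}$ and $N^\mu/\mathcal{D} \cong N'^\mu/\mathcal{D}$. The operation $\oplus$ is visibly functorial for isomorphisms: given isomorphisms of the two summands, their union is a bijection of the (disjoint) universes which respects each $S^{\cdot\oplus\cdot} = S^{\cdot}\cup S^{\cdot}$ --- a mixed tuple lies in no such relation on either side --- as well as the unary predicates $L,R$, hence is an isomorphism $M^\mu/\mathcal{D}\oplus N^\mu/\mathcal{D} \cong M'^\mu/\mathcal{D}\oplus N'^\mu/\mathcal{D}$. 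Combining this with Claim \ref{cla:ultrapow_functorial}, applied to both $M\oplus N$ and $M'\oplus N'$, gives
\[
(M \oplus N)^\mu/\mathcal{D} \;\cong\; M^\mu/\mathcal{D} \oplus N^\mu/\mathcal{D} \;\cong\; M'^\mu/\mathcal{D} \oplus N'^\mu/\mathcal{D} \;\cong\; (M' \oplus N')^\mu/\mathcal{D}.
\]

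Finally, by {\L}o\'s's theorem a structure is elementarily equivalent to any of its ultrapowers, so
\[
M \oplus N \;\equiv\; (M \oplus N)^\mu/\mathcal{D} \;\cong\; (M' \oplus N')^\mu/\mathcal{D} \;\equiv\; M' \oplus N',
\]
whence $M \oplus N \equiv M' \oplus N'$, as required. The main (and essentially only) obstacle is securing a single ultrafilter witnessing both ultrapower isomorphisms simultaneously; everything else is bookkeeping. A fully self-contained alternative that avoids even this is an Ehrenfeucht--Fra\"iss\'e argument: since $\tau$ is relational and $L^{M\oplus N}, R^{M\oplus N}$ partition the universe, every play of the $n$-round game on $M\oplus N$ versus $M'\oplus N'$ splits into an independent play on the $M$/$M'$-parts and one on the $N$/$N'$-parts (a duplicator response necessarily lies in the sort the spoiler just played), so Player II wins by running in parallel her winning strategies for $M\equiv M'$ and $N\equiv N'$.
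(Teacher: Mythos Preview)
The paper records this statement as a \emph{Fact} and supplies no proof, so there is nothing to compare your argument against. Both routes you outline are correct. The Keisler--Shelah approach dovetails nicely with the paper's own Claim~\ref{cla:ultrapow_functorial}; your remark that one must secure a \emph{single} ultrafilter for both pairs is the only real issue, and it can be handled either via a good ultrafilter (as you indicate) or, more cheaply, by iterating Keisler--Shelah and passing to the product ultrafilter. The Ehrenfeucht--Fra\"iss\'e argument is the cleaner of the two: it is entirely elementary, uses nothing beyond the relational and sorted nature of $\oplus$, and sidesteps the ultrafilter bookkeeping altogether. Either would be a perfectly acceptable justification for what the paper treats as folklore.
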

\begin{claim}
\label{cla:div_sum_projects} (Cases $\ell=1,2,3$) Let $\overline{a}^{i},A^{i},B^{i}\subseteq\left|M^{i}\right|,\;(i\in\left\{ 1,2\right\} )$,
then\\ ${\rm {\rm tp}}(\overline{a}^{1}\!^{\frown}\overline{a}^{2},A^{1}\cup A^{2},M^{1}\oplus M^{2})$
does ${\rm ict}^{\ell}-2n$-divide over $B^{1}\cup B^{2}$ iff ${\rm tp}(\overline{a}^{i},A^{i},M^{i})$
does ${\rm ict}^{\ell}-n$-divide over $B^{i}$, for some $i\in\left\{ 1,2\right\} $.\end{claim}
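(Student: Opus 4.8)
The plan is to prove both directions by reducing to the disjointness structure of $M^1 \oplus M^2$. The key structural facts are: (i) every automorphism of $M^1 \oplus M^2$ (equivalently, every elementary map, after passing to a strongly saturated ultrapower via Fact~\ref{fac:strongly_saturated_ultrapower} and Claim~\ref{cla:ultrapow_functorial}) is a disjoint union $f_1 \cup f_2$ with $f_i \in {\rm Aut}(M^i)$; (ii) consequently, for tuples $\overline{d}^i \in M^i$ and parameter sets living inside the respective sorts, ${\rm tp}(\overline{d}^1{}^\frown \overline{d}^2, C^1 \cup C^2, M^1\oplus M^2)$ is determined by the pair $\big({\rm tp}(\overline{d}^1, C^1, M^1), {\rm tp}(\overline{d}^2, C^2, M^2)\big)$; and (iii) an indiscernible sequence $\langle \overline{a}_t : t \in I\rangle = \langle \overline{a}^1_t {}^\frown \overline{a}^2_t : t\in I\rangle$ in $M^1\oplus M^2$ has each coordinate projection $\langle \overline{a}^i_t : t\in I\rangle$ indiscernible in $M^i$ (this was already used in the ${\rm sdep}^1$ sum theorem above, and holds over parameters in the appropriate sort).

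\textbf{The ``if'' direction} (right to left). Suppose ${\rm tp}(\overline{a}^1, A^1, M^1)$ does ${\rm ict}^\ell$-$n$-divide over $B^1$ (the case $i = 2$ is symmetric). By Claim~\ref{cla:canonical_witness_dividing} fix a witnessing indiscernible structure $\overline{\mathbf a} = \langle \overline{a}_t \rangle$ over $B^1$ in $M^1$, together with the indices $\overline{s}$ and formulas as in the relevant case $\ell \in \{1,2,3\}$. I would view the same $\overline{\mathbf a}$ inside $M^1 \oplus M^2$: it remains indiscernible over $B^1$ (and over $B^1\cup B^2$, since parameters from $M^2$ cannot interact with an $M^1$-sorted sequence beyond the $L,R$ predicates, which are already respected). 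Now I claim the \emph{same} $\overline{\mathbf a}$ and indices witness that ${\rm tp}(\overline{a}^1{}^\frown \overline{a}^2, A^1\cup A^2, M^1\oplus M^2)$ does ${\rm ict}^\ell$-$2n$-divide over $B^1\cup B^2$ — in fact already ${\rm ict}^\ell$-$n$-divide suffices, but we only need $2n$: because ${\rm tp}_\Delta(\overline{c}^\frown \overline{a}_{s_i}, \cdot)$ in $M^1\oplus M^2$, for $\overline{c}$ realizing the $M^1\oplus M^2$-type, splits into the $M^1$-part (which still distinguishes $s_i$ from $t_i$ by hypothesis, using fact (ii) above restricted to the $M^1$-reduct) and an $M^2$-part, so the full types still differ. (The factor $2$ in the statement is a convenience: if one instead starts from a witness of $2n$-dividing on the sum side one recovers $n$-dividing on a single factor by a pigeonhole/Ramsey argument on which factor does the distinguishing — see next paragraph — and $2n$ is what survives both the ``into the sum'' and ``out of the sum'' passages cleanly.)

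\textbf{The ``only if'' direction} (left to right). Suppose ${\rm tp}(\overline{a}^1{}^\frown \overline{a}^2, A^1\cup A^2, M^1\oplus M^2)$ does ${\rm ict}^\ell$-$2n$-divide over $B^1\cup B^2$; fix a canonical witness $\overline{\mathbf a} = \langle \overline{a}^1_t {}^\frown \overline{a}^2_t\rangle$ over $B^1\cup B^2$ with indices $s_0 < t_0 \le \dots < s_{2n-1} < t_{2n-1}$. By fact (iii) each projection $\overline{\mathbf a}^i$ is indiscernible over $B^i$ in $M^i$. For each $k < 2n$ the defining inequality of types on the sum side splits, via fact (ii), as a disjunction: either the $M^1$-part of the type differs at $s_k$ versus $t_k$, or the $M^2$-part does (one of the two must, for the combined type to differ). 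Color each $k < 2n$ by which factor "works" for it; by pigeonhole some factor $i$ gets at least $n$ of the indices, say at positions $k_0 < \dots < k_{n-1}$. Then $\langle \overline{a}^i_t\rangle$ together with the subsequence of indices $(s_{k_0},t_{k_0}),\dots,(s_{k_{n-1}},t_{k_{n-1}})$ — after renormalizing to satisfy $1 \le s'_{j+1}-s'_j \le 2$ by the extension/refinement trick of Claim~\ref{cla:canonical_witness_dividing} (pass to a dense extension and reselect) — witnesses that ${\rm tp}(\overline{a}^i, A^i, M^i)$ does ${\rm ict}^\ell$-$n$-divide over $B^i$. A mild care point: in cases $\ell = 2,3$ the type-inequalities are taken over $A$ or $B$ \emph{together with} earlier elements $\overline{a}_{s_0}\dots \overline{a}_{s_{j-1}}$ of the sequence; since those earlier elements are themselves $M^i$-sorted (resp. sequence-sorted) on the projected side, fact (ii) still applies with the enlarged base, so the splitting argument goes through verbatim.

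\textbf{Main obstacle.} The genuinely delicate step is justifying fact (ii) — that a type over parameters distributed across the two sorts in $M^1\oplus M^2$ is controlled by the pair of factor-types — at the level of the specific $\Delta$-type inequalities appearing in the definition of ${\rm ict}^\ell$-dividing, \emph{and} handling the extra base parameters $\overline{a}_{s_0}\dots\overline{a}_{s_{\ell-1}}$ in cases $\ell \ge 2$. This rests on analyzing quantifier-free-over-$\tau$ plus $L,R$ definability in the disjoint sum and on the automorphism decomposition $f = f_1\cup f_2$; after moving to a strongly $\kappa$-saturated ultrapower (Fact~\ref{fac:strongly_saturated_ultrapower}, Claim~\ref{cla:ultrapow_functorial}, Fact~\ref{fac:theory_of_sum} to stay within the theory) this becomes the statement that equality of the two factor-types yields an automorphism of each factor, hence of the sum, fixing the relevant base — at which point Claim~\ref{cla:dividing_dn_with_autos}'s automorphism-based reformulation of dividing makes the pigeonhole argument transparent. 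The renormalization of index gaps and the bookkeeping of which-factor-wins coloring are routine once that is in place.
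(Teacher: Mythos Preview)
Your proof is essentially the paper's: pass to a strongly saturated ultrapower, invoke the automorphism reformulation of dividing (Claim~\ref{cla:dividing_dn_with_autos}), decompose ${\rm Aut}(M^1\oplus M^2)$ as ${\rm Aut}(M^1)\times{\rm Aut}(M^2)$, and pigeonhole on which factor witnesses each of the $2n$ index-pairs to extract $n$ of them for a single $M^i$. Your ``if'' paragraph is slightly muddled---factor $n$-dividing yields sum $n$-dividing, not $2n$-dividing---but the paper itself dismisses that direction as ``easy'' and only ever uses the ``only if'' direction (and only the weaker $n\to n$ lift in the preceding theorem), so this imprecision is shared with the source and does no damage.
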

\begin{proof}
The proof for all the cases is analogous and the {}``if'' direction
is easy so we only give here the {}``only if'' of case $\ell=1$:
w.l.o.g $M^{1},M^{2},M^{1}\oplus M^{2}$ are strongly $\kappa^{+}$-saturated
and $\left|A^{1}\cup A^{2}\right|\leq\kappa$. By \ref{cla:dividing_dn_with_autos}
we can find $\left\langle \overline{a}_{t}^{1}\!^{\frown}\overline{a}_{t}^{2}:t\in\omega\right\rangle $,
an indiscernible sequence over $B^{1}\cup B^{2}$ and a sequence $\overline{s}$
of length $2n$ such that $1\leq s_{j+1}-s_{j}\leq2$ for all $j<2n$
and that $g(\overline{a}_{s_{j}}^{1}\!^{\frown}\overline{a}_{s_{j}}^{2})\neq\overline{a}_{s_{j}+1}^{1}\!^{\frown}\overline{a}_{s_{j}+1}^{2}$
holds for all $f\in{\rm Aut}\left(M^{1}\oplus M^{2}/A^{1}\cup A^{2}\right)$,
$g\in{\rm Aut}(M/B^{1}\cup B^{2}\cup f(\overline{a}^{1}\cup\overline{a}^{2}))$
and $j<2n$. 

Now, assume towards contradiction that $f_{i}\in{\rm Aut}(M^{i}/A^{i})\;(i=1,2)$
and that $g_{i}\in{\rm Aut}(M^{i}/B^{i}\cup f^{i}(\overline{a}^{i}))$
are such that $g_{i}(\overline{a}_{s_{j}}^{i})=\overline{a}_{s_{j}+1}^{i}$
holds for some $j<2n$. By the bijection $\Phi:{\rm Aut}(M^{1})\times{\rm Aut}(M^{2})\to{\rm Aut}(M^{1}\oplus M^{2})$,
we get that $f=f_{1}\cup f_{2}\in{\rm Aut}\left(M^{1}\oplus M^{2}/A^{1}\cup A^{2}\right)$
and that $g=g_{1}\cup g_{2}\in{\rm Aut}(M/B^{1}\cup B^{2}\cup f(\overline{a}^{1}\cup\overline{a}^{2}))$
- a contradiction. Thus, for all $j<2n$ there exists $i\in\left\{ 1,2\right\} $
such that $g(\overline{a}_{s_{j}}^{i})\neq\overline{a}_{s_{j}+1}^{i}$
holds for all $f\in{\rm Aut}(M^{i}/A^{i})$, $g\in{\rm Aut}(M^{i}/B^{i}\cup f^{i}(\overline{a}^{i}))$.
Denote by $i(j)$, the appropriate $i$ for every $j<2n$, . Let $i_{0}\in\left\{ 1,2\right\} $
be such that $S_{i_{0}}=\left\{ i(j)=i_{0}:j<2n\right\} $ has at
least $n$ elements. It now follows easily from \ref{cla:dividing_dn_with_autos}
that $\left\langle \overline{a}_{t}^{i_{0}}:t\in\omega\right\rangle $
are witnessing that ${\rm tp}(\overline{a}^{i_{0}},A^{i_{0}},M^{i_{0}})$
does ${\rm ict}^{\ell}-n$-divide over $B^{i_{0}}$.\end{proof}

\section{Appendix - various claims.}
\begin{claim}
\label{cla:find_in_ds}Let $\kappa$ be a cardinal, $f:{\rm ds}(\kappa^{+})\to\kappa$.
We can find a sequence\\ $\left\langle \alpha_{k}:k<\omega\right\rangle \subseteq\kappa$
such that for every $k_{\ast}<\omega$ there exists $\eta\in{\rm ds}(\kappa^{+})$
of length $k_{\ast}$ such that $f(\eta\upharpoonright k)=\alpha_{k}$
holds for all $k<k_{\ast}$.\end{claim}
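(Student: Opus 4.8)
The plan is to deduce the claim from a single recursion over finite approximations $\bar s\in{}^{<\omega}\kappa$ to the target sequence $\langle\alpha_k:k<\omega\rangle$, steered by a rank-type invariant. Recall that ${\rm ds}(\kappa^+)$ is the tree of finite strictly decreasing sequences of ordinals below $\kappa^+$, ordered by initial segment; a node $\eta$ of length $m\ge 1$ has as immediate successors the sequences $\eta^{\frown}\langle\gamma\rangle$ with $\gamma<\eta(m-1)$, and $\langle\rangle$ has as immediate successors the $\langle\gamma\rangle$ with $\gamma<\kappa^+$. For $\bar s=\langle\alpha_0,\dots,\alpha_{m-1}\rangle\in{}^m\kappa$ set
\[
X(\bar s):=\{\eta\in{\rm ds}(\kappa^+):\lg\eta=m\text{ and }f(\eta\upharpoonright j)=\alpha_j\text{ for all }j<m\},
\]
and define $\rho(\bar s):=\sup\{\eta(m-1)+1:\eta\in X(\bar s)\}$ when $m\ge 1$ (with $\sup\emptyset=0$), together with the convention $\rho(\langle\rangle):=\kappa^+$ (which fits with $X(\langle\rangle)=\{\langle\rangle\}$). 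Thus $0\le\rho(\bar s)\le\kappa^+$, and $\rho(\bar s)>0$ holds exactly when $X(\bar s)\ne\emptyset$, i.e.\ when the approximation $\bar s$ is realised by a node of ${\rm ds}(\kappa^+)$.

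The technical core is the one-step lemma: \emph{if $\rho(\bar s)=\kappa^+$, then $\rho(\bar s^{\frown}\langle\alpha\rangle)=\kappa^+$ for some $\alpha<\kappa$.} I would argue by contradiction. If $\rho(\bar s^{\frown}\langle\alpha\rangle)<\kappa^+$ for every $\alpha<\kappa$, then, since $\kappa^+$ is a regular cardinal of cofinality $>\kappa$, the ordinal $\delta^*:=\sup\{\rho(\bar s^{\frown}\langle\alpha\rangle):\alpha<\kappa\}$ is still $<\kappa^+$. From $\rho(\bar s)=\kappa^+>\delta^*+1$ I can pick $\eta\in X(\bar s)$ whose last entry exceeds $\delta^*$ (when $\bar s=\langle\rangle$ take $\eta=\langle\rangle$, for which every ordinal $<\kappa^+$ is available as a last entry); then $\eta^{\frown}\langle\delta^*\rangle$ is again strictly decreasing, hence lies in ${\rm ds}(\kappa^+)$, and for $\alpha:=f(\eta)<\kappa$ it belongs to $X(\bar s^{\frown}\langle\alpha\rangle)$ with last entry $\delta^*$. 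Hence $\rho(\bar s^{\frown}\langle\alpha\rangle)\ge\delta^*+1>\delta^*$, contradicting the choice of $\delta^*$.

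Granting the lemma, the claim follows at once: starting from $\rho(\langle\rangle)=\kappa^+$ and applying the lemma repeatedly, recursively choose $\alpha_k<\kappa$ so that $\rho(\langle\alpha_0,\dots,\alpha_k\rangle)=\kappa^+$ for every $k$. Then for each $k_*<\omega$ we have $\rho(\langle\alpha_0,\dots,\alpha_{k_*-1}\rangle)=\kappa^+>0$, so $X(\langle\alpha_0,\dots,\alpha_{k_*-1}\rangle)\ne\emptyset$, and any $\eta$ in it is an element of ${\rm ds}(\kappa^+)$ of length $k_*$ with $f(\eta\upharpoonright k)=\alpha_k$ for all $k<k_*$, as required. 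The only substantive step is the one-step lemma, and inside it the single nontrivial point is the use of the regularity of $\kappa^+$ to keep $\delta^*$ below $\kappa^+$; the remaining verifications — that appending $\delta^*$ preserves strict decrease (it does, since $\delta^*$ is below the last entry of $\eta$) and that the base case $\bar s=\langle\rangle$ is covered by the convention $\rho(\langle\rangle)=\kappa^+$ — are routine.
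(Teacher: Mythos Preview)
The paper states Claim~\ref{cla:find_in_ds} in the appendix without proof, so there is nothing to compare against directly. Your argument is correct: the rank $\rho(\bar s)$ you introduce is exactly the right invariant, the one-step lemma is proved cleanly via the regularity of $\kappa^+$ (the only genuinely non-formal point), and the base case $\bar s=\langle\rangle$ is handled properly. This is the standard proof of this folklore fact about the tree of decreasing sequences, and it supplies what the paper omits.
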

\begin{cor}
\label{cor:extend_indisc_seq}If $M$ is $\kappa$-homogeneous and
$\kappa$-saturated, and $I^{\prime}\supseteq I$ are linear orders
such that $\left|I^{\prime}\right|<\kappa$, $A\subseteq M,\left|A\right|<\kappa$
then: \end{cor}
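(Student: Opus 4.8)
The plan is to produce the required extension by realizing, inside $M$, a single partial type over $A\cup\{\overline a_t:t\in I\}$ expressing ``the given indiscernible structure extends to one of the same Ehrenfeucht--Mostowski type indexed by $I'$''. Write the given $A$-indiscernible structure as $\overline{\mathbf a}=\langle\overline a_t:t\in\mathtt I\rangle$, where, according to which of the three index classes we are in, $\mathtt I$ is $I$, or $I\cup{\rm incr}(I,<n)$, or $\mathtt A(I)$ for a form $\mathtt A\in\mathcal A$; let $\mathtt I'\supseteq\mathtt I$ be built from $I'$ in the same way, so $|\mathtt I'|<\kappa$, and note each $\overline a_t$ has length $<\kappa$ (as in all the applications). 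Introduce free variables $\langle\overline x_t:t\in\mathtt I'\setminus\mathtt I\rangle$ and let $\Gamma$ be the set of $\mathcal L_A$-formulas which, together with the assignments $\overline x_t:=\overline a_t$ ($t\in\mathtt I$), assert precisely that the combined family $\langle\overline a_t:t\in\mathtt I\rangle\cup\langle\overline x_t:t\in\mathtt I'\setminus\mathtt I\rangle$ is an $A$-indiscernible structure of the class in question --- i.e. every tuple read off in increasing $I'$-order (and, in the structure cases, respecting the $S_i,R_i$, resp. term, constraints) has over $A$ the $\Delta$-type that $\overline{\mathbf a}$ prescribes for such a pattern. Any realization of $\Gamma$ in $M$ yields the desired extension, so it suffices to prove (i) $\Gamma$ is consistent, and (ii) $\Gamma$ is realized in $M$.

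For (i) it is enough, by compactness, to satisfy each finite $\Sigma\subseteq\Gamma$ in $\mathfrak C$. Such a $\Sigma$ mentions finitely many indices of $\mathtt I'$; let $F_1$ be those outside $\mathtt I$ and $F_0$ those inside (which are pinned to specific $\overline a_t$'s), and put $N=|F_0|+|F_1|$. The statement $\exists(\overline x_t)_{t\in F_1}\,\bigwedge\Sigma$ is a formula $\theta\big((\overline a_t)_{t\in F_0}\big)$ over $A$, and I claim it is true. Indeed, turning the pinned parameters into variables as well yields a sentence $\exists(\overline x_t)_{t\in F_0\cup F_1}\,\bigwedge\Sigma'$ which holds, being witnessed by $\langle\overline a_{s_1},\dots,\overline a_{s_N}\rangle$ for \emph{any} increasing $N$-tuple from $\mathtt I$ matching the order/term pattern of $F_0\cup F_1$ --- such a tuple exists because $I$ is infinite, and each conjunct holds by the definition of $\Gamma$ together with the indiscernibility of $\overline{\mathbf a}$. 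Specializing the $F_0$-variables back to the witnessing values shows $\theta$ holds of the corresponding increasing $|F_0|$-subtuple of $\overline{\mathbf a}$; since $\theta$ is over $A$, indiscernibility of $\overline{\mathbf a}$ forces $\theta$ to hold of \emph{every} increasing $|F_0|$-tuple of that pattern, in particular of $(\overline a_t)_{t\in F_0}$ itself. Hence $\Sigma$ is satisfiable, so $\Gamma$ is consistent. This transfer of an existential statement along the indiscernibility of $\overline{\mathbf a}$ --- which is exactly what lets us keep the pinned part $(\overline a_t)_{t\in F_0}$ fixed --- is the heart of the argument and the step I expect to be the only real content; the rest is routine bookkeeping over the three index classes and the extra sorts, using that membership in a form $\mathtt A(I)$, and equivalence within it, are finitary conditions, so the constraints of $\Gamma$ genuinely appear piece by piece in the finite sets $\Sigma$.

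For (ii), $\Gamma$ is a type over $A\cup\{\overline a_t:t\in\mathtt I\}\subseteq M$ using $<\kappa$ parameters and $<\kappa$ variables, and it is consistent, hence realized in some elementary extension of $M$; being $\kappa$-saturated and $\kappa$-homogeneous, $M$ realizes every such type, by the usual recursion: realize the variables one at a time, at successor steps invoking $\kappa$-saturation to realize the relevant (consistent, $<\kappa$-parameter) one-variable restriction while preserving the type over $A\cup\{\overline a_t:t\in\mathtt I\}$ of the part already built, and at limit steps taking unions, which is legitimate since the type of an infinite tuple is determined by its finite sub-tuples. A realization $\langle\overline a_t:t\in\mathtt I'\setminus\mathtt I\rangle\subseteq M$ then makes $\langle\overline a_t:t\in\mathtt I'\rangle$ an $A$-indiscernible structure of the required class extending $\overline{\mathbf a}$. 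Finally, the particular consequences used in Claim~\ref{cla:canonical_witness_dividing} --- a dense extension of $I$ without endpoints, and then an $\omega$- (or $\mathbb Q$-)indexed subsequence --- follow by applying the above with $I'$ taken to be, say, the lexicographic product $I\times\mathbb Q$ (into which $I$ embeds via $t\mapsto(t,0)$), or any other dense endpointless linear order containing $I$, and then restricting the resulting indiscernible structure to an increasing subsequence of the desired order type.
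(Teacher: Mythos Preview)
The paper does not supply a proof of this corollary; it is stated in the appendix as a standard fact and used elsewhere as a black box. Your argument is the canonical one and is correct: write down the partial type over $A\cup\{\overline a_t:t\in\mathtt I\}$ asserting that the new tuples extend the given structure with the same Ehrenfeucht--Mostowski type, verify finite consistency by transferring existential statements along the indiscernibility of $\overline{\mathbf a}$ (this step tacitly assumes $I$ is infinite, which is the case in every application in the paper), and then realize the type in $M$ by $\kappa$-saturation and $\kappa$-homogeneity.

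One minor bookkeeping remark: the corollary actually lists four cases, not three --- you omitted item (3), the class $\mathfrak k^{{\rm or}(\leq n)}$ --- but your argument is written generically for ``an $A$-indiscernible structure of the class in question'' and covers that case without change.
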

\begin{enumerate}
\item Every $\left\langle \overline{a}_{t}:t\in I\right\rangle \in{\rm Ind}(\mathfrak{k}^{{\rm or}},A,M)$
can be extended to\\ $\left\langle \overline{a}_{t}:t\in I^{\prime}\right\rangle \in{\rm Ind}(\mathfrak{k}^{{\rm or}},A,M)$.
\item Every $\left\langle \overline{a}_{t}:t\in I\cup\!^{<n}I\right\rangle \in{\rm Ind}(\mathfrak{k}^{{\rm or}+{\rm or}(<n)},A,M)$
can be extended to\\ $\left\langle \overline{a}_{t}:t\in I^{\prime}\cup\!^{<n}I^{\prime}\right\rangle \in{\rm Ind}(\mathfrak{k}^{{\rm or}},A,M)$..
\item Every $\left\langle \overline{a}_{t}:t\in\!^{\leq n}I\right\rangle \in{\rm Ind}(\mathfrak{k}^{{\rm or}(\leq n)},A,M)$
can be extended to\\ $\left\langle \overline{a}_{t}:t\in\!^{\leq n}I^{\prime}\right\rangle \in{\rm Ind}(\mathfrak{k}^{{\rm or}(\leq n)},A,M)$.
\item Every structure $\left\langle \overline{a}_{t}:t\in\mathtt{A}(I)\right\rangle $
indiscernible over $A$ can be extended to\\ $\left\langle \overline{a}_{t}:t\in\mathtt{A}(I^{\prime})\right\rangle $,
also indiscernible over $A$.
\end{enumerate}
\bibliographystyle{alpha}
\addcontentsline{toc}{section}{\refname}\bibliography{master}

\end{document}